\newlist{enumup}{enumerate}{1}
\newlist{enumequi}{enumerate}{1}
\newlist{enumprop}{enumerate}{1}
\setlist[enumup,1]{label=\textup{(\alph{*})}, ref=\textup{(\alph{*})}}
\setlist[enumequi,1]{label=\textup{(\roman{*})}, ref=\textup{(\roman{*})}}
\setlist[enumprop,1]{label=\textup{(\arabic{*})}, ref=\textup{(\arabic{*})}}
\setlist[enumerate,1]{label=\textup{(\alph{*})}, ref=\textup{(\alph{*})}}
\setlist[enumerate,2]{label=\textup{(\Alph{*})}, ref=\textup{(\Alph{*})}}
\DeclareMathOperator{\diam}{diam}
\newcommand{\eps}{\varepsilon}
\newcommand{\spann}{\mbox{span}}
\newcommand{\HB}{\text{H{\kern -0.35em}B}}
\newcommand{\vertiii}[1]{{\left\vert\kern-0.25ex\left\vert\kern-0.25ex\left\vert#1\right\vert\kern-0.25ex\right\vert\kern-0.25ex\right\vert}}
\newcommand{\CWO}{\textit{CWO}\xspace}
\newcommand{\CWOSX}{\textit{CWO}-$S$\xspace}
\newcommand{\CWOBX}{\textit{CWO}-$B$\xspace}
\newcommand{\CCOMtwo}{\textup{(\textit{co}2)}\xspace}
\newcommand{\CCOMn}{\textup{(\textit{co\hskip1pt n})}\xspace}
\newcommand{\CCOM}{\textup{(\textit{co})}\xspace}
\newcommand{\K}{\mathbb{K}}
\newcommand{\C}{\mathbb{C}}
\newcommand{\N}{\mathbb{N}}
\newcommand{\R}{\mathbb{R}}
\newcommand{\D}{\mathbb{D}}
\newcommand{\T}{\mathbb{T}}
\newcommand{\Xs}{X^\ast}
\newcommand{\xs}{x^\ast}
\newcommand{\zs}{z^\ast}
\DeclareMathOperator{\supp}{supp}
\DeclareMathOperator{\re}{Re}
\DeclareMathOperator{\ext}{ext}
\DeclareMathOperator{\conv}{conv}
\DeclareMathOperator{\clco}{\overline{conv}}
\newcommand{\pten}{\ensuremath{\hat{\otimes}_\pi}}
\newcommand{\iten}{\ensuremath{\hat{\otimes}_\varepsilon}}
\newtheorem{thm}{Theorem}[section]
\newtheorem{prop}[thm]{Proposition}
\newtheorem{lem}[thm]{Lemma}
\newtheorem{cor}[thm]{Corollary}
\newtheorem*{Claim}{\sc Claim}
\theoremstyle{definition}
\newtheorem{defn}[thm]{Definition}
\newtheorem{example}[thm]{Example}
\theoremstyle{remark}
\newtheorem{rem}[thm]{Remark}
\numberwithin{equation}{section}
\author[Abrahamsen et al.]{Trond Arnold Abrahamsen \and Julio Becerra
  Guerrero \and Rainis Haller \and Vegard Lima \and M\"{a}rt
  P\~{o}ldvere}
\address[T.~A.~Abrahamsen]{Department of Mathematics, University of
  Agder, Postbox 422, 4604 Kristiansand, Norway.}
\email{trond.a.abrahamsen@uia.no}
\urladdr{http://home.uia.no/trondaa/index.php3}
\address[J.~B.~Guerrero]{Universidad de Granada, Facultad de Ciencias,
Departemento de An\'{a}lisis Mathem\'{a}tico, 18071-Granada, Spain}
\email{juliobgn@ugr.es}
\address[R.~Haller]{University of Tartu, J. Liivi 2,
50409 Tartu, Estonia.}
\email{rainis.haller@ut.ee}
\address[V.~Lima]{Department of Engineering Sciences, University of
  Agder, Postbox 422, 4604 Kristiansand, Norway.}
\email{vegard.lima@uia.no}
\address[M.~P\~{o}ldvere]{University of Tartu, J. Liivi 2,
50409 Tartu, Estonia.}
\email{mart.poldvere@ut.ee}
\thanks{The second named author was partially supported by
MEC (Spain) Grant MTM2014-58984-P
and Junta de Andaluc\'ia grants FQM-0199, FQM-1215.}
\thanks{R. Haller and M. P\~{o}ldvere were partially supported
by institutional research funding IUT20-57
of the Estonian Ministry of Education and Research.}
\title[CWO Banach spaces]{Banach spaces where convex combinations
 of relatively weakly open subsets of the unit ball
 are relatively weakly open}
\begin{document}

\subjclass[2010]{46B04, 46B20}
\keywords {relatively weakly open set, convex combinations of
  relatively weakly open sets, scattered locally compact, locally
  complemented}

\begin{abstract}
  We introduce and study Banach spaces which have property \CWO,
  i.e., every finite convex combination of relatively
  weakly open subsets of their unit
  ball is open in the relative weak topology of the unit
  ball. Stability results of such spaces are established, and we
  introduce and discuss a geometric condition---property \CCOM---on a
  Banach space. Property \CCOM essentially says
  that the operation of taking convex combinations
  of elements of the unit ball is, in a sense, an open map.
  We show that if a finite dimensional Banach space
  $X$ has property \CCOM, then for any scattered locally compact Hausdorff
  space $K$, the space $C_0(K,X)$ of continuous $X$-valued functions vanishing at
  infinity has property \CWO. Several Banach spaces are proved
  to possess this geometric property; among others: 2-dimensional real
  spaces, finite dimensional strictly convex spaces,
  finite dimensional polyhedral spaces,
  and the complex space $\ell_1^n$.
  In contrast to this, we provide an example of a $3$-dimensional
  real Banach space $X$
  for which $C_0(K,X)$ fails to have property \CWO.

  We also show that $c_0$-sums of finite dimensional Banach spaces
  with property \CCOM have property \CWO. In particular, this
  provides examples of such spaces outside the class of $C_0(K,X)$-spaces.
\end{abstract}
\maketitle

\section{Introduction}
\label{sec:intro}
In this paper we consider Banach spaces over the
scalar field $\K$, where $\K$ is either the
real field $\R$ or the complex field $\C$.
If not mentioned explicitly the space involved
could be either real or complex.
By $S_X, B_X$, and
$B_X^\circ$ we denote respectively the unit sphere, unit ball, and
open unit ball of a Banach space $X$.
The topological dual of $X$ is denoted by
$X^*$. By a slice (of the unit ball) we mean a set
of the form
\begin{align*}
  S(x^*, \eps) := \{x \in B_X: \mbox{Re}\,x^*(x) > 1 - \eps\},
\end{align*}
where $\eps > 0$ and $x^* \in S_{X^*}$.
A topological space $K$ is said to be \emph{scattered}
if every non-empty subset $A$ of $K$ contains a point which is
isolated in $A$.

Let $\mathcal{F} = \{ f \in L_1[0,1] : f \ge 0, \|f\| = 1\}$.
It was shown in \cite[Remark~IV.5, p.~48]{MR912637},
that $\mathcal{F}$ has ``a remarkable geometrical property'':
any convex combination of a finite number of relatively
weakly open subsets (in particular, slices) of $\mathcal{F}$
is still relatively weakly open.
Recently, in \cite{2017arXiv170106169A},
it was shown that if $K$ is a scattered
compact Hausdorff space, then the space $C(K,\K)$
of continuous $\K$-valued functions on $K$
has the property
that finite convex combinations of slices of $B_{C(K,\K)}$
are relatively weakly open in $B_{C(K,\K)}$.
Subsequently, in \cite{2017arXiv170302919H}, it was shown
that $C(K,\K)$ has this property if and only if
$K$ is scattered.
In fact, in \cite{2017arXiv170302919H} the result was proven
for the space $C_0(K,\K)$
of continuous $\K$-valued functions on $K$ vanishing at infinity,
where $K$ is a locally compact Hausdorff space.
The results in \cite{2017arXiv170106169A} are
true also in this setting.

The main focus of this paper is to prove that,
for some Banach spaces $X$,
the space $C_0(K,X)$ of $X$-valued continuous functions on
a scattered locally compact Hausdorff space $K$
also satisfies the property that finite convex combinations
of slices of $B_{C_0(K,X)}$ are relatively weakly open in
$B_{C_0(K,X)}$. We will prove this by showing that even finite convex
combinations of relatively weakly
open subsets of the unit ball $B_{C_0(K,X)}$ are relatively weakly
open in the unit ball.
More specifically, we consider the following properties.

\begin{defn}\label{defn:cwos}
Let $X$ be a Banach space. We say that
\begin{enumerate}
 \item $X$ has property \CWO, if,
   for every finite convex combination $C$ of relatively
   weakly open subsets of $B_X$, the set $C$ is open in the
   relative weak topology of $B_X$;
\item $X$ has property \CWOSX, if, for every finite convex
   combination $C$ of relatively weakly open subsets of
   $B_X$, every $x \in C\cap S_X$ is an interior point of $C$ in the
   relative weak topology of $B_X$;
\item $X$ has property \CWOBX, if, for every finite convex
  combination $C$
  of relatively weakly open subsets of $B_X$, every
  $x \in C\cap B_X^\circ$ is an interior point of $C$ in the relative weak
  topology of $B_X$.
\end{enumerate}
\end{defn}

It is clear that a Banach space $X$
has property \CWO if and only if it has
both properties \CWOSX and \CWOBX.
We will show in Theorem~\ref{thm:L1mu-CWOSX}
that $L_1[0,1]$ has property \CWOSX,
but it fails to have property \CWOBX
by Corollary~\ref{cor:L1_fails_CWOBXo}.
In fact, any $\ell_1$-sum of
two spaces with property \CWOSX has property \CWOSX,
but fails to have property \CWOBX
\cite[Theorem~2.3 and Proposition~2.1]{2017arXiv170302919H}.
Let $K$ be a scattered locally compact Hausdorff space.
In
Example~\ref{example: a space X without (P10) such that C_0(K,X) is not CWO1},
we give an example of a finite dimensional Banach space $X$
such that $C_0(K,X)$ has property \CWOBX,
by Theorem~\ref{thm: main results on CWO-s for C_0(K,X)},
but $C_0(K,X)$ fails property \CWOSX,
by Proposition~\ref{prop: a class of Banach spaces for which C_0(K,X) is never CWO1 (and X fails (P10))}.
Thus neither of the properties \CWOSX and \CWOBX implies the other.

Note that, in the definition of property \CWOSX, the intersection
$C \cap S_X$ may be empty. Every
strictly convex Banach space has property \CWOSX.
Indeed, let $C$ denote a finite convex combination of
relatively weakly open subsets $W_1,\dots,W_n$ of $B_X$.
Then every $x \in C \cap S_X$ is an extreme point of $B_X$,
hence $x \in \bigcap_{j=1}^n W_j$ which is a relatively
weakly open neighbourhood of $x$ contained in $C$.

We also remark that if a Banach space $X$ has
property \CWOBX, then, by \cite[Theorem~2.4]{2017arXiv170304749L},
every finite convex combination of slices of $B_X$
has diameter two, that is, $X$ has the
\emph{strong diameter two property}.
This is not the case for property \CWOSX
since, for example, $\ell_2$ has this property.

Let $K$ be a locally compact Hausdorff space.
It is known that $C_0(K,X)$ can be identified with the
injective tensor product $C_0(K) \iten X$.
It is also known that the (injective) tensor product
$X \iten Y$ of two Banach spaces $X$ and $Y$
contains one complemented isometric copies of
both $X$ and $Y$.
We will show in Proposition~\ref{prop:norm1compl-inherit-cwox}
that property \CWO is inherited by one complemented
subspaces.
Hence in order for $C_0(K,X)$ to have property \CWO,
it is necessary that both $C_0(K)$ and $X$
have property \CWO.
By \cite[Theorem~3.1]{2017arXiv170302919H},
this implies that $K$ must be scattered.
Hence we will only consider scattered locally compact
Hausdorff spaces $K$.

The paper is organized as follows.
Section \ref{sec:3} is about spaces of the type $C_0(K,X)$,
where $K$ is a scattered locally compact Hausdorff space
and $X$ a finite dimensional Banach space.
We establish and discuss here a geometric condition on
$X$, property \CCOM, guaranteeing that $C_0(K,X)$ has property \CWO;
see Theorem~\ref{thm: main results on CWO-s for C_0(K,X)}.
We prove that all strictly convex spaces have property \CCOM.
We also show that $C_0(K,X)$ has property \CWOBX
whenever $X$ is finite dimensional.

In Section \ref{sec:4} we show that any two dimensional real Banach space
has property \CCOM, but there exists a three dimensional
real Banach space which fails property \CWOSX.
This shows that property \CWO is strictly stronger
than property \CWOBX. We also show that if the dual of a finite dimensional
Banach space $X$ (real or complex) is polyhedral,
then $X$ has property \CCOM.
Finally we show that both the real and complex $\ell_1^n$ have property
\CCOM.
It should be noted in this connection that, in the complex case,
$(\ell_1^n)^* =\ell_\infty^n$ is not a polyhedral space
while $\ell_1^n$ is.

In Section \ref{sec:2} we prove,
in Proposition~\ref{prop:norm1compl-inherit-cwox},
that all the \CWO-properties are stable by taking one complemented
subspaces. We also show that if $X$ contains a complemented
subspace isomorphic to $\ell_1$, then $X$ does not
have property \CWOBX.

In Section \ref{sec:5} we show that $c_0$-sums of finite dimensional
Banach spaces with property \CCOM have property \CWO.
This result provides examples of spaces with property \CWO
outside the class of $C_0(K,X)$-spaces discussed in the two
previous sections. We end this section by showing that
the real space $L_1(\mu)$
has property \CWOSX provided $\mu$ is a non-zero $\sigma$-finite
(countably additive non-negative) measure.

We follow standard Banach space notation
as can be found, e.g., in the book \cite{MR2766381}.
As mentioned above, we consider Banach spaces
over the scalar field $\K$, where $\K = \R$ or
$\K = \C$.
We use the notation
$\T = \{ \alpha \in \K : |\alpha| = 1 \}$
and
$\D = \{ \alpha \in \K : |\alpha| \le 1 \}$.

\section{A geometric condition for Banach spaces $X$ guaranteeing
  that $C_0(K,X)$ has property \CWO}
\label{sec:3}
Our main objective in this section is to establish a geometric
condition for finite dimensional Banach spaces $X$ guaranteeing that
the space $C_0(K,X)$, where $K$ is a scattered locally compact
Hausdorff space, has property \CWO.

\begin{defn}\label{def:P11}
  Let $X$ be a Banach space.
  We say that a point $x\in B_X$ has \emph{property \CCOM},
  if for every $n \in \N$, $n \ge 2$,
  \begin{itemize}
    \item[\CCOMn]
      whenever
      $x_1,\dotsc,x_n \in B_X$ and $\lambda_1,\dotsc,\lambda_n > 0$,
      $\sum_{j=1}^n \lambda_j = 1$, are such that
      $x = \sum_{j=1}^n \lambda_j x_j$, and $\eps>0$,
      there is a $\delta>0$ such that,
      setting $B:=B(x,\delta)\cap B_X$,
      there are continuous functions
      \begin{equation*}
        \widehat{v}_j \colon \,B \to B_X,
        \qquad j \in \{1,\dotsc,n\},
      \end{equation*}
      such that, for every $u \in B$,
      \begin{equation}\label{eq:P11-conditions}
        u = \sum_{j=1}^n \lambda_j \widehat{v}_j(u)
        \qquad \text{and} \qquad \|\widehat{v}_j(u) - x_j\| < \eps,
        \quad j \in \{1,\dotsc,n\}.
      \end{equation}
  \end{itemize}

  We say that the space $X$ has \emph{property \CCOM}
  if every point $x\in B_X$ has property \CCOM.
\end{defn}

For finite dimensional Banach spaces, property \CCOM implies property \CWO.
We will see in Proposition~\ref{prop: a class of Banach spaces for
  which C_0(K,X) is never CWO1 (and X fails (P10))}
and Example~\ref{example: a space X without (P10) such that C_0(K,X)
  is not CWO1}
that not every finite dimensional Banach space has property \CWO.

\begin{prop}\label{prop:fin_dim_co_cwo}
  Let $X$ be a finite dimensional Banach space
  with property \CCOM.
  Then $X$ has property \CWO.
\end{prop}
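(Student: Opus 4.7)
Since $X$ is finite dimensional, the weak topology on $B_X$ coincides with the norm topology, so ``relatively weakly open'' means ``relatively norm-open''. The plan is therefore to fix a point $x$ in a convex combination $C=\sum_{j=1}^n\lambda_j W_j$ of relatively open sets $W_1,\dots,W_n\subseteq B_X$ and show directly that $x$ is an interior point of $C$ by using property \CCOMn at $x$.

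More precisely, I would start by choosing a representation $x=\sum_{j=1}^n \lambda_j x_j$ with each $x_j\in W_j$. Since every $W_j$ is relatively open in $B_X$, I can pick $\eps>0$ so small that $B(x_j,\eps)\cap B_X\subseteq W_j$ for every $j\in\{1,\dots,n\}$. Now property \CCOMn at $x$, applied to this representation and this $\eps$, yields $\delta>0$ together with (not even the continuity is needed here, only existence of values) maps $\widehat v_1,\dots,\widehat v_n\colon B\to B_X$, where $B:=B(x,\delta)\cap B_X$, satisfying
\begin{equation*}
u=\sum_{j=1}^n\lambda_j\widehat v_j(u)\qquad\text{and}\qquad \|\widehat v_j(u)-x_j\|<\eps
\end{equation*}
for every $u\in B$ and every $j$. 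The second condition forces $\widehat v_j(u)\in B(x_j,\eps)\cap B_X\subseteq W_j$, and then the first condition exhibits $u$ as an element of $C$. Hence $B\subseteq C$, so $x$ is a (norm-, hence weakly-) interior point of $C$ in $B_X$.

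Since $x\in C$ was arbitrary, $C$ is relatively weakly open in $B_X$, which is exactly property \CWO. There is essentially no obstacle: the whole argument is simply an unpacking of the definitions once one notices that finite dimensionality collapses the weak topology to the norm topology and that property \CCOM is formulated precisely so as to promote an arbitrary decomposition of $x$ to a nearby decomposition of every point close to $x$.
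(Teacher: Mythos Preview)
Your proof is correct and follows essentially the same approach as the paper's: both use finite dimensionality to identify the weak and norm topologies, choose $\eps$ so that $B(x_j,\eps)\cap B_X\subseteq W_j$, and then invoke property \CCOM to produce the functions $\widehat v_j$ whose values witness that every $u$ near $x$ lies in the convex combination. Your remark that continuity of the $\widehat v_j$ is not actually needed here is a nice additional observation not made explicit in the paper.
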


\begin{proof}
  Let $n\in\N$, let  $U_1,\dotsc,U_n$ be relatively weakly open
  subsets of $B_X$, and
  let $\lambda_1,\dotsc,\lambda_n > 0$ with $\sum_{j=1}^n \lambda_j = 1$.
  If $x = \sum_{j=1}^n \lambda_j x_j$
  where $x_j \in U_j$, then
  there exists an $\varepsilon > 0$
  such that $B(x_j,\varepsilon) \cap B_X \subset U_j$
  for every $j \in\{1,\dotsc,n\}$.

  By assumption, we can find a $\delta > 0$ and functions
  $\widehat{v}_j : B(x,\delta) \cap B_X \to B_X$
  such that, for every $u \in B(x,\delta) \cap B_X$,
  we have $\widehat{v}_j(u) \in B(x_j,\varepsilon)$ and
  $\sum_{j=1}^n \lambda_j \widehat{v}_j(u) = u$.
  This means that $\sum_{j=1}^n \lambda_j U_j$
  is relatively weakly ($=$norm) open in $B_X$.
\end{proof}

\begin{prop}\label{prop:extB_X_and_points_in_B^0_X_have_P11}
  Let $X$ be a Banach space.
  \begin{enumerate}
  \item\label{item:prop_P11-1}
    Suppose that, for a point $x\in B_X$, either $x$ is an extreme
    point of $B_X$ or $\|x\|<1$.
    Then $x$ has property \CCOM.
  \item\label{item:prop_P11-2}
    Suppose that $X$ is strictly convex. Then $X$ has property \CCOM.
  \end{enumerate}
\end{prop}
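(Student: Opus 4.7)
The plan is to prove part~(1) by splitting into the two cases $\|x\|<1$ and $x\in\ext B_X$, after which part~(2) is immediate: in a strictly convex space every point of $S_X$ is an extreme point, and every point of $B_X^\circ$ trivially satisfies $\|x\|<1$, so part~(1) covers all of $B_X$.

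For the interior case $\|x\|<1$, my plan is first to perturb the given decomposition $x=\sum_{j=1}^n \lambda_j x_j$ into one that sits uniformly strictly inside $B_X$. Setting $y_j:=(1-\alpha)x_j+\alpha x$ for a small $\alpha>0$ preserves $\sum_{j=1}^n \lambda_j y_j = x$, satisfies $\|y_j-x_j\|\le 2\alpha$, and by convexity gives the uniform bound $\|y_j\|\le 1-\alpha(1-\|x\|)$. For $u\in B(x,\delta)\cap B_X$ I would then simply take the affine map $\widehat{v}_j(u):=y_j+(u-x)$. This is automatically continuous and satisfies $\sum_{j=1}^n \lambda_j \widehat{v}_j(u)=u$ by construction; choosing $\alpha<\eps/4$ first, and then $\delta<\min\{\alpha(1-\|x\|),\,\eps/2\}$, one forces $\widehat{v}_j(u)\in B_X$ and $\|\widehat{v}_j(u)-x_j\|<\eps$ simultaneously.

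For the extreme-point case I would start with the observation that if $x\in\ext B_X$ and $x=\sum_{j=1}^n \lambda_j x_j$ with $\lambda_j>0$ summing to $1$ and $x_j\in B_X$, then necessarily $x_j=x$ for every $j$. This is a short induction on $n$: peel off $\lambda_1 x_1$ against the convex combination $\sum_{j\ge 2}\frac{\lambda_j}{1-\lambda_1}x_j\in B_X$ and apply the two-term definition of an extreme point. Once all $x_j$ equal $x$, the trivial assignment $\widehat{v}_j(u):=u$ with $\delta:=\eps$ does the job: it is continuous, $\sum_{j=1}^n \lambda_j \widehat{v}_j(u)=u$ holds because $\sum \lambda_j=1$, and $\|\widehat{v}_j(u)-x_j\|=\|u-x\|<\eps$.

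No step is a genuine obstacle; the proof is essentially bookkeeping. The only point that deserves a little care is the order of constants in the interior case: one must fix the perturbation parameter $\alpha$ first, so that the vectors $y_j$ sit at distance at least $\alpha(1-\|x\|)$ from $S_X$, and only then choose $\delta$ smaller than this gap to ensure that the translates $y_j+(u-x)$ remain inside $B_X$ as $u$ varies.
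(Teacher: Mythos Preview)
Your proposal is correct and is essentially the paper's own argument. In the interior case your map $\widehat{v}_j(u)=y_j+(u-x)$ with $y_j=(1-\alpha)x_j+\alpha x$ is exactly the paper's formula $\widehat{v}_j(u)=x_j+r(x-x_j)+(u-x)$ with $r=\alpha$, and in the extreme-point case both you and the paper take $\widehat{v}_j(u)=u$ with $\delta=\eps$; the only cosmetic differences are your explicit induction to force $x_j=x$ and a slightly different bookkeeping of constants.
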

\begin{proof}
  \ref{item:prop_P11-2} follows immediately from \ref{item:prop_P11-1};
  so let us prove \ref{item:prop_P11-1}.
  Let $n\in\N$, $n\geq2$, let $x_1,\dotsc,x_n \in B_X$ and $\lambda_1,\dotsc,\lambda_n >0$, $\sum_{j=1}^n \lambda_j = 1$,
  be such that $x = \sum_{j=1}^n \lambda_j x_j$, and let $\eps>0$.

  First suppose that $x$ is an extreme point of $B_X$.
  Then $x = x_j \in S_X$ for every $j$.
  Taking $\delta:=\eps$ and
  defining $\widehat{v}_j(u)= u$ for every
  $u\in B:=B(x,\delta)\cap B_X$,
  the conditions
  \eqref{eq:P11-conditions}
  hold.

  Now suppose that $\|x\| = 1 - \sigma$ for some $\sigma > 0$.
  Put $r := \frac\varepsilon2$.
  Choose $\delta > 0$ with $\delta < \sigma r$.
  Define, for
  every $u \in B := B(x,\delta)\cap B_X$,
  \begin{equation*}
    \widehat{v}_j(u) = x_j + r ( x - x_j ) + (u - x),
    \quad j \in \{1,\dotsc,n\}.
  \end{equation*}
  Since $\sum_{j=1}^n \lambda_j (x - x_j) = 0$,
  we get $\sum_{j=1}^n \lambda_j\widehat{v}_j(u) = u$.
  We also have
  \begin{equation*}
    \|\widehat{v}_j(u)\| = \|r x + (1-r)x_j + (u-x)\|
    \le r(1-\sigma) + (1-r) + \delta
    = 1 - r \sigma + \delta \le 1;
  \end{equation*}
  hence $\widehat{v}_j : B \to B_X$.
  Since $\delta < \sigma r$, we have
  \begin{equation*}
    \|\widehat{v}_j(u) - x_j\| = \|r(x - x_j) + (u - x)\|
    \le r(1-\sigma) +r +\delta
    < 2r = \varepsilon,
  \end{equation*}
  and we are done.
\end{proof}

Now comes the ``core'' result of this section.

\begin{thm}\label{thm: "core" theorem for C_0(K,X)}
  Let $K$ be a scattered locally compact Hausdorff space
  and let $X$ be a finite dimensional Banach space.
  Suppose that an element $x\in B_{C_0(K,X)}$ is such that, for every
  $t\in K$, the point $x(t)\in B_X$ has property \CCOM.
  Then, whenever $x$ belongs to a finite convex combination of
  relatively weakly open subsets of $B_{C_0(K,X)}$,
  the element $x$ is an interior point of this convex combination in
  the relative weak topology of $B_{C_0(K,X)}$.
\end{thm}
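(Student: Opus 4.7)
Given the decomposition $x = \sum_{j=1}^n \lambda_j x_j$ with each $x_j$ in a relatively weakly open set $U_j \subset B_{C_0(K,X)}$, I aim to exhibit a relatively weakly open neighborhood $V$ of $x$ such that every $y \in V$ admits a representation $y = \sum_{j=1}^n \lambda_j y_j$ with $y_j$ in a suitably shrunken version of $U_j$.

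\textbf{Reducing to finite data.} Since $K$ is scattered locally compact Hausdorff, every regular Borel measure on $K$ is purely atomic; together with $\dim X < \infty$, this forces any basic relatively weakly open neighborhood of $x_j$ in $B_{C_0(K,X)}$ to contain one of the concrete form
\begin{equation*}
U_j' \,=\, \{y \in B_{C_0(K,X)} : \|y(t) - x_j(t)\| < \eta \ \text{for all } t \in T\}
\end{equation*}
for a common finite set $T \subset K$ and a tolerance $\eta > 0$ (after truncating the atomic expansion of the defining functionals and replacing $X^*$-norm conditions by finitely many scalar ones). Using that each $x,x_1,\dotsc,x_n \in C_0(K,X)$ has compact essential support and that a scattered locally compact Hausdorff space carries a base of compact open sets, I enclose these supports and $T$ in a compact clopen subset $K_0 \subset K$ and partition $K_0$ into pairwise disjoint clopen pieces $W_1,\dotsc,W_m$ with representatives $t_i^* \in W_i$ (chosen to include the points of $T$), fine enough that $x$ and each $x_j$ vary by at most $\eta/4$ on each $W_i$ and that $\|x(t)\|, \|x_j(t)\| < \eta/4$ on $K \setminus K_0$.

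\textbf{Pointwise perturbations and the neighborhood $V$.} For each representative, property \CCOM applied to $x(t_i^*) = \sum_j \lambda_j x_j(t_i^*)$ with tolerance $\eta/4$ yields $\delta_i > 0$ and continuous maps $\widehat{v}_j^{(i)} \colon B(x(t_i^*),\delta_i) \cap B_X \to B_X$ satisfying $u = \sum_j \lambda_j \widehat{v}_j^{(i)}(u)$ and $\|\widehat{v}_j^{(i)}(u) - x_j(t_i^*)\| < \eta/4$. Define
\begin{equation*}
V \,=\, \{y \in B_{C_0(K,X)} : \|y(t_i^*) - x(t_i^*)\| < \delta_i,\ i = 1,\dotsc,m\},
\end{equation*}
which is relatively weakly open because $X$ is finite-dimensional. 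For $y \in V$, I build $y_j$ piecewise: on each $W_i$, take $y_j(t)$ to be $\widehat{v}_j^{(i)}(y(t_i^*))$ shifted by $(x_j(t) - x_j(t_i^*))$ plus a small shared correction designed to absorb the residual $y(t) - y(t_i^*) - (x(t) - x(t_i^*))$ and enforce $\sum_j \lambda_j y_j(t) = y(t)$ pointwise; outside $K_0$, where the $x_j$ are uniformly small, the interior-point perturbation of Proposition~\ref{prop:extB_X_and_points_in_B^0_X_have_P11} supplies a valid formula. Continuity of $y_j$ across the partition follows because each $W_i$ is clopen.

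\textbf{Main obstacle.} The delicate balance lies in the assembly step: simultaneously guaranteeing the pointwise identity $\sum_j \lambda_j y_j(t) = y(t)$, the membership $\|y_j(t)\| \le 1$ for all $t$, and the weak-neighborhood inequality $\|y_j(t) - x_j(t)\| < \eta$ at every $t \in T$. The interplay relies on three ingredients operating in concert: the scattered/atomic structure of $K$, which allows the test points of $T$ to be isolated within their clopen cells and reduces weak-duality conditions to point evaluations; the finite-dimensionality of $X$, which converts the norm control of $y$ at the representatives into genuine weak-neighborhood constraints; and property \CCOM, which supplies just enough local flexibility at each $x(t_i^*)$ to perturb the $x_j$ into $y_j$ while preserving the convex decomposition. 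Producing an explicit formula that meets all three constraints, and showing that sufficiently refining the clopen partition suffices, is the technical heart of the argument.
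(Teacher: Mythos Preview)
Your outline has a genuine gap in the assembly step, and it is not merely a matter of writing down ``an explicit formula'': the scheme as described cannot work.

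The neighborhood $V$ you define controls $y$ only at the representatives $t_1^\ast,\dotsc,t_m^\ast$; for any other $t\in W_i$ the value $y(t)$ is an arbitrary point of $B_X$. Your proposed formula for $y_j(t)$ on $W_i$ is, after unwinding,
\[
y_j(t)=\widehat{v}_j^{(i)}\bigl(y(t_i^\ast)\bigr)+\bigl(x_j(t)-x_j(t_i^\ast)\bigr)+\underbrace{\bigl(y(t)-y(t_i^\ast)-(x(t)-x(t_i^\ast))\bigr)}_{\text{the ``correction''}}.
\]
The correction term is \emph{not} small: $y(t)-y(t_i^\ast)$ can have norm up to $2$, no matter how finely you chose the clopen partition, because the partition was fixed before $y$ was. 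Hence there is no reason for $\|y_j(t)\|\le 1$. Refining the partition does not help, since for every fixed partition there are $y\in V$ that oscillate wildly inside a single $W_i$.

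The paper's proof avoids this by exploiting the \emph{continuity} of the maps $\widehat{v}_j$ in a way your construction does not. It applies property \CCOM only at the finitely many test points $s\in T$, then, for a given $u\in U$, chooses $u$-dependent compact neighborhoods $H_s$ small enough that $u(t)\in B(x(s),\delta_s)\cap B_X$ for all $t\in H_s$, and sets $w_j(t):=\widehat{v}_{s,j}(u(t))$ there. This automatically lands in $B_X$ and satisfies $\sum_j\lambda_j w_j(t)=u(t)$. A Urysohn function supported in $\bigcup_s H_s$ and equal to $1$ on $T$ then interpolates between $w_j$ and the trivial decomposition $u=\sum_j\lambda_j u$, yielding $u_j:=\kappa w_j+(1-\kappa)u\in B_{C_0(K,X)}$. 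No global clopen partition is needed, no control of $u$ away from $T$ is required, and the only place the estimate $\|u_j(s)-x_j(s)\|<\eps$ is checked is at the points of $T$ themselves---where $u_j(s)=\widehat{v}_{s,j}(u(s))$.
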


Before proving Theorem~\ref{thm: "core" theorem for C_0(K,X)},
let us cash in some dividends it brings
summarized in the following main theorem.

\begin{thm}\label{thm: main results on CWO-s for C_0(K,X)}
  Let $K$ be a scattered locally compact Hausdorff space
  and let $X$ be a finite dimensional Banach space.
  Then
  \begin{enumerate}
  \item\label{item:1-CWO-C_0KX}
    $C_0(K,X)$ has property \CWOBX;
  \item\label{item:2-CWO-C_0KX}
    if $X$ has property \CCOM,
    then $C_0(K,X)$ has property \CWO;
  \end{enumerate}
\end{thm}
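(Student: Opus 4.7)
The plan is to deduce this theorem directly from the ``core'' result Theorem~\ref{thm: "core" theorem for C_0(K,X)}, using Proposition~\ref{prop:extB_X_and_points_in_B^0_X_have_P11} to verify its hypothesis pointwise. In both parts, one starts with an element $x$ belonging to a finite convex combination $C$ of relatively weakly open subsets of $B_{C_0(K,X)}$, and one wants to conclude that $x$ is an interior point of $C$ in the relative weak topology. By Theorem~\ref{thm: "core" theorem for C_0(K,X)}, it suffices to check that, for every $t\in K$, the value $x(t)\in B_X$ has property \CCOM.

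For part \ref{item:1-CWO-C_0KX}, I would fix an arbitrary $x\in C\cap B_{C_0(K,X)}^\circ$. Since $\|x\|_\infty<1$ and $\|x(t)\|\le\|x\|_\infty$ for every $t\in K$, each value $x(t)$ lies in the open unit ball $B_X^\circ$. By Proposition~\ref{prop:extB_X_and_points_in_B^0_X_have_P11}\,\ref{item:prop_P11-1}, every such $x(t)$ has property \CCOM, and Theorem~\ref{thm: "core" theorem for C_0(K,X)} then yields that $x$ is an interior point of $C$; this is exactly property \CWOBX.

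For part \ref{item:2-CWO-C_0KX}, the assumption that $X$ itself has property \CCOM means by definition that every point of $B_X$ has property \CCOM. Hence for any $x\in C\cap B_{C_0(K,X)}$ and any $t\in K$, the value $x(t)\in B_X$ automatically has property \CCOM, and a second application of Theorem~\ref{thm: "core" theorem for C_0(K,X)} shows that $x$ is an interior point of $C$. As this holds for every $x\in C$, the set $C$ is relatively weakly open, i.e., $C_0(K,X)$ has property \CWO.

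There is essentially no obstacle here once Theorem~\ref{thm: "core" theorem for C_0(K,X)} is available: the argument reduces to a pointwise verification of property \CCOM for the values $x(t)$, which is supplied by the hypothesis in part \ref{item:2-CWO-C_0KX} and by the open-ball case of Proposition~\ref{prop:extB_X_and_points_in_B^0_X_have_P11} in part \ref{item:1-CWO-C_0KX}. All the genuine work---the lifting of a weak neighbourhood of $x$ in $B_{C_0(K,X)}$ to simultaneous continuous selections of the convex-combination data on a neighbourhood of each $t$, and the use of scatteredness of $K$ to patch these local selections---is deferred to the proof of Theorem~\ref{thm: "core" theorem for C_0(K,X)}.
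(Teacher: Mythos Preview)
Your proposal is correct and follows essentially the same approach as the paper: both parts are deduced from Theorem~\ref{thm: "core" theorem for C_0(K,X)} by verifying pointwise that each $x(t)$ has property \CCOM, using Proposition~\ref{prop:extB_X_and_points_in_B^0_X_have_P11}\,\ref{item:prop_P11-1} for part~\ref{item:1-CWO-C_0KX} and the hypothesis on $X$ for part~\ref{item:2-CWO-C_0KX}. Your write-up simply makes explicit the details that the paper's two-line proof leaves implicit.
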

\begin{proof}
  \ref{item:1-CWO-C_0KX} follows from
  Theorem~\ref{thm: "core" theorem for C_0(K,X)} and
  Proposition~\ref{prop:extB_X_and_points_in_B^0_X_have_P11},
  \ref{item:prop_P11-1}.

  \medskip
  \ref{item:2-CWO-C_0KX} follows trivially from
  Theorem~\ref{thm: "core" theorem for C_0(K,X)}.
\end{proof}

In the proof of Theorem \ref{thm: "core" theorem for C_0(K,X)}, it is convenient to rely on the following lemma.

\begin{lem}\label{lem: C_0(K,X)* when K scattered -- betterment}
  Let $K$ be a scattered locally compact Hausdorff space
  and let $X$ be a Banach space.
  \begin{enumerate}
  \item\label{item:1-helperC0KX}
    Let $f\in S_{C_0(K,X)^\ast}$ and let $\eps>0$.
    Then there are $N\in\N$, $t_1,\dotsc,t_N\in K$, and $\xs_1,\dotsc,\xs_N\in\Xs$
    such that $\sum_{j=1}^N\|\xs_j\|\leq1$ and,
    for the functional
    $g = \sum_{j=1}^N \delta_{t_j}\otimes \xs_j \in C_0(K,X)^\ast$, where
    \begin{equation*}
      g(z) = \sum_{j=1}^N \xs_j \bigl( z(t_j) \bigr),
      \quad z\in C_0(K,X),
    \end{equation*}
    one has $\|f-g\|<\eps$.
  \item\label{item:2-helperC0KX}
    Let $x\in B_{C_0(K,X)}$ and let $U$ be a neighbourhood of $x$
    in the relative weak topology of $B_{C_0(K,X)}$.
    Then there are a finite subset~$T$ of~$K$ and an
    $\eps > 0$ such that, whenever $u \in B_{C_0(K,X)}$ satisfies
    \begin{equation}\label{eq: ||u(t)-x(t)||<delta for every t in T}
      \|u(t) - x(t)\| < \eps
      \quad \text{for every } t\in T,
    \end{equation}
    one has $u\in U$.
  \item\label{item:3-helperC0KX}
    Suppose that $X$ is finite dimensional.
    Let $x\in B_{C_0(K,X)}$, let $T$ be a finite subset of $K$,
    and let $\varepsilon > \nobreak0$.
    Then there is a neighbourhood $U$ of $x$
    in the relative weak topology of $B_{C_0(K,X)}$
    such that every $u\in U$ satisfies
    \eqref{eq: ||u(t)-x(t)||<delta for every t in T}.
  \end{enumerate}
\end{lem}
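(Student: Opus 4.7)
The plan is to establish the three parts in order; part~(1) is the main structural result and parts~(2)--(3) follow by standard weak-topology manipulations.

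For part~(1), the crucial input is that when $K$ is scattered, every $f \in C_0(K,X)^\ast$ admits a ``purely atomic'' representation $f = \sum_{t\in S} \delta_t \otimes \xs_t$, where $S\subset K$ is countable, $\xs_t \in \Xs$, and $\|f\| = \sum_{t\in S}\|\xs_t\|$. In the scalar case this is classical---every regular Borel measure on a scattered locally compact Hausdorff space is purely atomic---and it extends to the vector-valued setting via $C_0(K,X) = C_0(K)\iten X$, or can be proved directly by Cantor--Bendixson induction on the scattering rank of $K$. Given such a representation of $f$, I pick a finite $\{t_1,\dotsc,t_N\}\subset S$ whose complement has tail $\sum_{t\notin\{t_1,\dotsc,t_N\}}\|\xs_t\|<\eps$, and set $g := \sum_{j=1}^N \delta_{t_j}\otimes \xs_{t_j}$. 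Then $\sum_{j=1}^N\|\xs_{t_j}\|\le\|f\|=1$, and $\|f-g\|<\eps$ since $f-g$ is itself purely atomic with the corresponding (tail) sum of norms.

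For part~(2), I start from a basic relatively weak neighbourhood $V = \{u\in B_{C_0(K,X)} : |f_i(u-x)|<\eta,\ i=1,\dotsc,k\}\subset U$ of $x$, with $f_i\in S_{C_0(K,X)^\ast}$. Applying part~(1) to each $f_i$, I obtain $g_i = \sum_j \delta_{t_{i,j}}\otimes \xs_{i,j}$ with $\|f_i-g_i\|<\eta/4$. Let $T$ collect the (finitely many) points $t_{i,j}$ and set $M := \max_i \sum_j\|\xs_{i,j}\|$. Using $\|u-x\|\le 2$, whenever $\|u(t)-x(t)\|<\eps$ for every $t\in T$,
\begin{equation*}
|f_i(u-x)| \le \|f_i-g_i\|\cdot\|u-x\| + |g_i(u-x)| \le \tfrac{\eta}{2} + M\eps.
\end{equation*}
Choosing $\eps$ sufficiently small (e.g.\ $\eps := \eta/(2M+1)$) forces $u\in V\subset U$.

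Part~(3) is immediate from finite dimensionality of $X$. Fix a basis $e_1,\dotsc,e_d$ of $X$ with biorthogonal functionals $e_1^\ast,\dotsc,e_d^\ast\in\Xs$, and a constant $C>0$ with $\|y\|\le C\max_i|e_i^\ast(y)|$ for every $y\in X$. For each $t\in T$ and each $i$, the functional $h_{t,i} := \delta_t\otimes e_i^\ast \in C_0(K,X)^\ast$ satisfies $h_{t,i}(u)=e_i^\ast(u(t))$, so
\begin{equation*}
U := \bigl\{u\in B_{C_0(K,X)} : |h_{t,i}(u-x)|<\eps/C,\ t\in T,\ i=1,\dotsc,d\bigr\}
\end{equation*}
is a relatively weak neighbourhood of $x$ inside which $\|u(t)-x(t)\|\le C\max_i|e_i^\ast(u(t)-x(t))|<\eps$ for every $t\in T$. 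The main obstacle is the structural result in part~(1)---correctly identifying $C_0(K,X)^\ast$ and extracting pure atomicity from scatteredness in the $\Xs$-valued setting; everything else is routine bookkeeping with the weak topology.
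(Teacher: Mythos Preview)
Your proof is correct and tracks the paper's argument closely: part~(1) in both relies on $C_0(K)^\ast=\ell_1(K)$ for scattered $K$ and the identification $C_0(K,X)^\ast=C_0(K)^\ast\pten\Xs$ (the paper phrases this as $B_{Z^\ast}=\clco\{\delta_s\otimes\xs\}$ and approximates by a finite convex combination, whereas you invoke the full atomic series and truncate---these are two sides of the same coin), and part~(2) is essentially identical. The only cosmetic difference is in part~(3), where the paper takes a finite $\eps/3$-net of $B_{\Xs}$ rather than a biorthogonal basis; both are routine ways to exploit finite-dimensionality of $X$.
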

\begin{proof}
  \ref{item:1-helperC0KX}.
  Set $Z := C_0(K,X) = C_0(K) \iten X$.
  It is known that $Z^* = C_0(K)^* \pten X^*$ and that
  $B_{Z^*} = \clco\{S_{C_0(K)^*} \otimes S_{X^*}\}$
  \cite[Proposition~2.2]{MR2003f:46030}.
  Since $K$ is scattered, we have $C_0(K)^*=\ell_1(K)$ and
  \begin{equation*}
    B_{Z^*} = \clco\{\delta_s \otimes \xs :
    s \in K, \xs \in S_{X^*}\}.
  \end{equation*}

  \medskip
  \ref{item:2-helperC0KX}.
  Let a finite subset $\mathcal{F}\subset S_{C_0(K,X)^\ast}$
  and an $\eps>0$ be such that
  \begin{equation*}
    \{
    u \in  B_{C_0(K,X)} \colon\, |f(u) - f(x)| < 3\eps
    \text{ for every }
    f \in \mathcal{F}
    \}
    \subset U.
  \end{equation*}
  By \ref{item:1-helperC0KX}, for every $f \in \mathcal{F}$,
  there are  $N_f\in\N$, $t_{f,j}\in K$, $\xs_{f,j}\in\Xs$,
  $j \in \{1,\dotsc,N_f\}$,
  such that $\sum_{j=1}^{N_f} \|\xs_{f,j}\| \leq 1$ and,
  for the functional
  $g_f = \sum_{j=1}^{N_f} \delta_{t_{f,j}}\otimes\xs_{f,j} \in C_0(K,X)^\ast$,
  one has $\|f-g_f\|<\eps$.

  Set
  \begin{equation*}
    T: = \bigl\{t_{f,j} \colon\, f\in\mathcal{F},\,
    j \in \{1,\dotsc,N_f\}\,\bigr\},
  \end{equation*}
  and suppose that $u\in B_{C_0(K,X)}$ satisfies
  \eqref{eq: ||u(t)-x(t)||<delta for every t in T}.
  For every $f\in\mathcal{F}$, since
  \begin{align*}
    |g_f(u-x)|
    &=\biggl|
      \sum_{j=1}^{N_f}\xs_{f,j}\bigl(u(t_{f,j})-x(t_{f,j})\bigr)
      \biggr|
      \leq \sum_{j=1}^{N_f} \|\xs_{f,j}\| \|u(t_{f,j})-x(t_{f,j})\|\\
    &< \eps \sum_{j=1}^{N_f}\|\xs_{f,j}\| \leq \eps,
  \end{align*}
  one has
  \begin{align*}
    |f(u)-f(x)|
    &\leq\bigl| f(u)-g_f(u) \bigr| + |g_f(u-x)| + \bigl| g_f(x)-f(x) \bigr|\\
    &\leq 2 \|f-g_f\| +|g_f(u-x)|\\
    &< 2\eps + \eps = 3\eps,
  \end{align*}
  and it follows that $u\in U$.

  \medskip
  \ref{item:3-helperC0KX}.
  Let $B\subset B_{\Xs}$ be a finite
  $\frac{\varepsilon}{3}$-net for $B_{\Xs}$. Set
  \begin{equation*}
    U :=\Bigl\{
    u\in
    B_{C_0(K,X)}\colon\, |(\delta_t\otimes
  \xs)(u-x)| < \dfrac{\varepsilon}{3}
    \quad\text{for all $t\in T$ and $\xs\in B$}
    \Bigr\},
  \end{equation*}
  where the functional $\delta_t\otimes\xs\in C_0(K,X)^\ast$ is defined by
  \begin{equation*}
    (\delta_t\otimes\xs)(u) = \xs\bigl(u(t)\bigr),
    \quad u\in C_0(K,X).
  \end{equation*}
  Let $u \in U$ and $t\in T$ be arbitrary.
  Picking $\zs \in B_{\Xs}$ so that
  \begin{equation*}
    \zs \bigl(u(t) - x(t) \bigr) = \|u(t) - x(t)\|,
  \end{equation*}
  there is an $\xs\in B$ satisfying $\|\zs-\xs\| < \frac{\varepsilon}{3}$.
  One has
  \begin{align*}
    \|u(t)-x(t)\|
    &=\zs\bigl(u(t)-x(t)\bigr)\\
    &\leq\|\zs-\xs\|\,\|u(t)-x(t)\|+\bigl|\xs\bigl(u(t)-x(t)\bigr)\bigr|\\
    &<2\dfrac{\varepsilon}{3}+\dfrac{\varepsilon}{3}=\varepsilon.
  \end{align*}
\end{proof}

We are now in a position to prove
Theorem~\ref{thm: "core" theorem for C_0(K,X)}.

\begin{proof}[Proof of Theorem \ref{thm: "core" theorem for C_0(K,X)}]
  Let $n\in\N$, let $U_1,\dotsc,U_n$
  be relatively weakly open subsets of $B_{C_0(K,X)}$,
  and let $x_j \in U_j$ and $\lambda_j > 0$,
  $\sum_{j=1}^n \lambda_j = 1$,
  be such that $x = \sum_{j=1}^n \lambda_j x_j$.
  We are going to find a neighbourhood $U$ of $x$
  in the relative weak topology of $B_{C_0(K,X)}$
  such that $U \subset \sum_{j=1}^n \lambda_j U_j$.

  By Lemma \ref{lem: C_0(K,X)* when K scattered -- betterment}, (b),
  there are an $\eps>0$ and a finite subset $T$ of $K$ such that
  \begin{itemize}
  \item
    \emph{whenever  $u_1,\dotsc,u_n \in B_{C_0(K,X)}$
      are such that, for every $s\in T$,
      \begin{equation}\label{eq: ||y(s)-x_i(s)||<eps and ||w(s)-z(s)||<eps}
        \|u_j(s) - x_j (s) \| < \eps,\quad j \in \{1,\dotsc,n\},
      \end{equation}
      one has $u_j \in U_j$, $j \in \{1,\dotsc,n\}$.}
  \end{itemize}
  For every $s\in T$, let $\delta_s$, $B_s$, and $\widehat{v}_{s,j}$
  be, respectively, the $\delta$, $B$, and the functions
  $\widehat{v}_j$ from Definition~\ref{def:P11}
  with $x=x(s)$ and $x_j = x_j(s)$.
  By Lemma~\ref{lem: C_0(K,X)* when K scattered -- betterment},~(c),
  there is a neighbourhood $U$ of $x$ in
  the relative weak topology of $B_{C_0(K,X)}$ such that,
  for every $u\in U$,
  \begin{equation*}
    \|u(s) - x(s)\| < \delta_s
    \quad\text{for every } s\in T.
  \end{equation*}
  Let $u \in U$ be arbitrary.
  We are going to show that $u \in \sum_{j=1}^n \lambda_j U_j$.

  For every $s\in T$,
  pick $H_s$ to be a compact neighbourhood of $s$ such that
  \begin{equation*}
    u(t) \in B_s \quad \text{for every } t\in H_s.
  \end{equation*}
  We can choose the neighbourhoods $H_s$, $s\in T$, to be pairwise
  disjoint.
  For every $j \in \{1,\dotsc,n\}$, since $X$ is finite dimensional,
  by Tietze's extension theorem,
  there is a continuous function $w_j : K \to X$ such that
  $w_j(t) = \widehat{v}_{s,j} \bigl( u(t) \bigr)$ for every $s\in T$
  and every $t\in H_s$.
  By Urysohn's lemma,
  there is a $\kappa\in C_0(K,\R)$ with values in $[0,1]$
  such that $\kappa|_T=1$ and
  $\supp\kappa \subset \bigcup_{s\in T} H_s$.
  Set
  \begin{equation*}
    u_j := \kappa w_j + (1-\kappa) u\in B_{C_0(K,X)},\quad j \in \{1,\dotsc,n\}.
  \end{equation*}
  Notice that $u = \sum_{j=1}^n \lambda_j u_j$.
  Indeed, if $t \notin \supp\kappa$,
  then $\kappa(t) = 0$ and thus
  $\sum_{j=1}^n \lambda_j u_j(t) =
  \sum_{j=1}^n \lambda_j u(t) = u(t)$;
  if $t \in \supp\kappa$,
  then $t \in H_s$ for some $s \in T$,
  thus
  \begin{equation*}
    \sum_{j=1}^n \lambda_j w_j(t)
    =
        \sum_{j=1}^n \lambda_j \widehat{v}_{s,j} \bigl( u(t) \bigr)
    =  u(t)
  \end{equation*}
  (because $u(t)\in B_s$),
  and
  \begin{align*}
    \sum_{j=1}^n \lambda_j u_j(t)
    &=
    \sum_{j=1}^n \lambda_j \bigl(\kappa(t) w_j(t) + (1-\kappa(t))u(t) \bigr) \\
    &=\kappa(t)u(t)+\bigl(1-\kappa(t)\bigr)u(t)
      =u(t).
  \end{align*}
  Also notice that, for every $j \in \{1,\dotsc,n\}$ and every $s\in T$, since
  $u_j(s) = w_j(s) = \widehat{v}_{s,j} \bigl( u(s) \bigr)$
  and $u(s)\in B_s$, one has
  \eqref{eq: ||y(s)-x_i(s)||<eps and ||w(s)-z(s)||<eps},
  thus $u_j \in U_j$.
\end{proof}

\section{Banach spaces with property \CCOM}
\label{sec:4}

In this section we explore Banach spaces with property \CCOM.
We give an example of a finite dimensional Banach space,
which fails property \CCOM, and many examples of
finite dimensional Banach spaces with property \CCOM
(see Propositions~\ref{prop:polyhedral-dual-ccom}
and \ref{prop:every-2-dim-is-CCOM}, and
Theorem~\ref{thm:complex-ell_1^n} below).
We start with a characterization of property \CCOM.

\begin{defn}\label{defn:P10}
  Let $X$ be a Banach space. We say that a point $x\in B_X$ has
  \emph{property \CCOMtwo} if the condition \CCOMn
  in Definition~\ref{def:P11} is satisfied for $n=2$.

  We say that the space $X$ has \emph{property \CCOMtwo}
  if every point $x\in B_X$ has property \CCOMtwo.
\end{defn}

\begin{prop}\label{prop: X has (co) <=> X has (co2)}
  For a Banach space $X$,
  properties \CCOMtwo and \CCOM are equivalent.
\end{prop}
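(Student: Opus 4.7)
The plan is to prove the nontrivial direction, namely $(\textit{co}2) \Rightarrow (\textit{co})$, by induction on $n\ge 2$, the number of terms in the convex combination. The base case $n=2$ is property \CCOMtwo itself. For the inductive step, the idea is to split off the first summand, reducing an $n$-fold convex combination to a $2$-fold one whose second summand is, in turn, an $(n-1)$-fold convex combination; then one applies \CCOMtwo to the outer split and the inductive hypothesis to the inner one, and composes the resulting continuous selectors.

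More concretely, fix $x\in B_X$, suppose $x=\sum_{j=1}^n\lambda_j x_j$ with $x_j\in B_X$, $\lambda_j>0$ and $\sum\lambda_j=1$, and let $\eps>0$. Set
\[
y := \sum_{j=2}^n \frac{\lambda_j}{1-\lambda_1}\, x_j \in B_X,
\qquad\text{so that}\qquad
x = \lambda_1 x_1 + (1-\lambda_1) y.
\]
By the inductive hypothesis applied to $y$ with tolerance $\eps$, there exist $\delta'>0$ and continuous maps $\widehat{v}_j' \colon B(y,\delta')\cap B_X \to B_X$ for $j\in\{2,\dotsc,n\}$ such that, for every $w\in B(y,\delta')\cap B_X$,
\[
w = \sum_{j=2}^n \frac{\lambda_j}{1-\lambda_1}\,\widehat{v}_j'(w)
\qquad\text{and}\qquad
\|\widehat{v}_j'(w) - x_j\| < \eps.
\]
Set $\eps' := \min(\eps,\delta')$, and apply property \CCOMtwo to the two-term combination $x = \lambda_1 x_1 + (1-\lambda_1) y$ with tolerance $\eps'$. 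This produces a $\delta>0$ and continuous maps $\widehat{v}_1, \widehat{w} \colon B(x,\delta)\cap B_X \to B_X$ with
\[
u = \lambda_1 \widehat{v}_1(u) + (1-\lambda_1)\widehat{w}(u),
\quad \|\widehat{v}_1(u)-x_1\|<\eps',\quad \|\widehat{w}(u)-y\|<\eps'\le\delta'
\]
for all $u\in B(x,\delta)\cap B_X$. In particular $\widehat{w}(u)\in B(y,\delta')\cap B_X$, so the compositions $\widehat{v}_j(u) := \widehat{v}_j'\bigl(\widehat{w}(u)\bigr)$, $j\in\{2,\dotsc,n\}$, are well-defined continuous maps into $B_X$.

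It remains to verify the two conditions in \eqref{eq:P11-conditions}. Expanding
\[
\sum_{j=1}^n \lambda_j \widehat{v}_j(u)
= \lambda_1 \widehat{v}_1(u) + (1-\lambda_1)\sum_{j=2}^n \frac{\lambda_j}{1-\lambda_1}\,\widehat{v}_j'\bigl(\widehat{w}(u)\bigr)
= \lambda_1 \widehat{v}_1(u) + (1-\lambda_1)\widehat{w}(u) = u,
\]
and $\|\widehat{v}_1(u)-x_1\|<\eps'\le\eps$, while $\|\widehat{v}_j(u)-x_j\|<\eps$ for $j\ge 2$ by the choice of the $\widehat{v}_j'$. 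This closes the induction. The only real subtlety, and the step requiring some care, is the bookkeeping of tolerances so that the inner selector $\widehat{w}(u)$ always lands in the domain $B(y,\delta')\cap B_X$ of the maps $\widehat{v}_j'$; choosing $\eps'\le\delta'$ in the application of \CCOMtwo ensures this.
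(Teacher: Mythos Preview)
Your proof is correct and follows essentially the same inductive strategy as the paper: split off the first summand, apply \CCOMtwo to the outer two-term combination, apply the inductive hypothesis to the inner $(n-1)$-term combination, and compose. Your bookkeeping with $\eps'=\min(\eps,\delta')$ is in fact slightly more explicit than the paper's, which tacitly uses a single tolerance small enough to serve both roles.
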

\begin{proof}
  It is clear that property \CCOM for $X$ implies property \CCOMtwo.
  For the reverse implication, assume that $X$ has property \CCOMtwo,
  and that $m \in \N$ with $m \geq 2$ is such that,
  whenever $x \in B_X$, the condition \CCOMn
  in Definition~\ref{def:P11} holds for $n=m$.
  It suffices to show that, whenever $x \in B_X$,
  the condition~\CCOMn holds also for $n = m+1$.
  To this end, let $x\in B_X$, let  $x_1,\dotsc,x_{m+1}\in B_X$ and
  $\lambda_1,\dotsc,\lambda_{m+1}>0$, $\sum_{j=1}^{m+1} \lambda_j = 1$,
  be such that $x = \sum_{j=1}^{m+1} \lambda_j x_j$, and let $\eps>0$.

  Setting $\lambda := \sum_{j=2}^{m+1} \lambda_j$ and
  $y := \sum_{j=2}^{m+1} \frac{\lambda_j}{\lambda} x_j$,
  observe that $\sum_{j=2}^{m+1} \frac{\lambda_j}{\lambda} = 1$,
  $\lambda_1 + \lambda = 1$, and $x = \lambda_1 x_1 + \lambda y$.

  By our assumption, there is a $\delta_0 > 0$ such that,
  setting $B_0 := B(y,\delta_0) \cap B_X$,
  there are continuous functions
  $\widehat{v}_j \colon \, B_0 \to B_X$,
  $j \in \{2,\dotsc, m+1\}$, such that, for every $v \in B_0$,
  \begin{equation*}
    v = \sum_{j=2}^{m+1}\frac{\lambda_j}{\lambda} \widehat{v}_j(v)
    \quad\text{and}\quad
    \|\widehat{v}_j(v) - x_j\| < \eps
    \quad\text{for every } j \in \{2,\dotsc,m+1\}.
  \end{equation*}

  Since $X$ has property \CCOMtwo,
  there is a $\delta > 0$ such that,
  setting $B := B(x,\delta) \cap B_X$,
  there are continuous functions $\widehat{u}_1 \colon \,B \to B_X$
  and $\widehat{v} \colon\, B \to B_X$ such that,
  for every $u \in B$,
  \begin{equation*}
    u = \lambda_1\widehat{u}_1(u) + \lambda \widehat{v}(u),
    \quad
    \|\widehat{u}_1(u) - x_1\| < \eps,
    \quad \|\widehat{v}(u) - y\| < \delta_0.
  \end{equation*}
  It remains to define, for every $j \in \{2,\dotsc,m+1\}$,
  a function $\widehat{u}_j \colon\, B \to B_X$
  by $\widehat{u}_j = \widehat{v}_j \circ \widehat{v}$,
  because in that case, for every $u \in B$,
  observing that $\widehat{v}(u) \in B_0$, one has
  \begin{equation*}
    \|\widehat{u}_j(u) - x_j\| =
    \bigl\|\widehat{v}_j\bigl(\widehat{v}(u)\bigr) - x_j\bigr\|
    < \eps
    \quad\text{for every } j \in\{2,\dotsc,m+1\},
  \end{equation*}
  and
  \begin{equation*}
    \widehat{v}(u) =
    \sum_{j=2}^{m+1} \frac{\lambda_j}{\lambda}\,
    \widehat{v}_j\bigl(\widehat{v}(u)\bigr)
    = \sum_{j=2}^{m+1} \frac{\lambda_j}{\lambda}\,
    \widehat{u}_j(u),
  \end{equation*}
  and thus
  \begin{equation*}
    u = \lambda_1 \widehat{u}_1(u) + \lambda \widehat{v}(u)
    = \sum_{j=1}^{m+1} \lambda_j \widehat{u}_j(u),
  \end{equation*}
  which shows that $X$ has property \CCOM.
\end{proof}

The following proposition indicates a class of Banach spaces $X$, which
do not have property \CCOM nor does the space $C_0(K,X)$ have property \CWOSX.
A concrete example of a representative of this class will be given
in Example~\ref{example: a space X without (P10) such that C_0(K,X) is not CWO1}.

\begin{prop}\label{prop: a class of Banach spaces for which C_0(K,X) is never CWO1 (and X fails (P10))}
  Let $X$ be a Banach space.
  Suppose that there is a point $x \in S_X$ such that
  $x \notin \ext B_X$ and
  $\ext B_X \cap B(x,\varepsilon) \neq \emptyset$
  for every $\varepsilon > 0$.
  Then
  \begin{enumerate}
  \item\label{item:1-linesegment-in-sphere}
    $x$ fails property \CCOMtwo;
  \item\label{item:2-linesegment-in-sphere}
    $X$ fails property \CWOSX;
  \item\label{item:3-linesegment-in-sphere}
    whenever $K$ is a locally compact Hausdorff space, the
    space $C_0(K,X)$ fails property \CWOSX.
  \end{enumerate}
\end{prop}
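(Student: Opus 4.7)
The plan is to prove (a) by direct contradiction, deduce (b) as the special case $K=\{t_0\}$ of (c) (for which $C_0(K,X)\cong X$), and prove (c) by transplanting the local obstruction from $X$ to $C_0(K,X)$ via an Urysohn bump. For (a), since $x\notin\ext B_X$, I would fix distinct $x_1,x_2\in B_X$ with $x=\tfrac12 x_1+\tfrac12 x_2$ and set $\eps:=\tfrac14\|x_1-x_2\|$. For any candidate $\delta>0$, the hypothesis supplies an extreme point $e\in B_X$ with $\|e-x\|<\delta$. If maps $\widehat v_1,\widehat v_2\colon B\to B_X$ satisfying \eqref{eq:P11-conditions} existed, then $e=\tfrac12\widehat v_1(e)+\tfrac12\widehat v_2(e)$, and extremality of $e$ would force $\widehat v_1(e)=\widehat v_2(e)=e$, giving $\|x_1-x_2\|\le\|x_1-e\|+\|e-x_2\|<2\eps=\tfrac12\|x_1-x_2\|$, a contradiction. (Continuity of the $\widehat v_j$ is not used; their mere pointwise existence at $e$ already fails.)

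For (c), assuming $K\neq\emptyset$ (the empty case is vacuous), I would fix $t_0\in K$ and, by Urysohn's lemma for locally compact Hausdorff spaces, pick $\phi\in C_0(K,\R)$ with $0\le\phi\le1$ and $\phi(t_0)=1$. Set $g(t):=\phi(t)x$ and $g_j(t):=\phi(t)x_j$; then $g,g_1,g_2\in S_{C_0(K,X)}$ and $g=\tfrac12 g_1+\tfrac12 g_2$. Next I would pick $\xs\in\Xs$ with $\alpha:=\re\xs(x_2)-\re\xs(x_1)>0$ and define
\begin{align*}
  W_1 &:= \bigl\{ z\in B_{C_0(K,X)}\colon \re\xs\bigl(z(t_0)\bigr) < \re\xs(x_1)+\alpha/4 \bigr\},\\
  W_2 &:= \bigl\{ z\in B_{C_0(K,X)}\colon \re\xs\bigl(z(t_0)\bigr) > \re\xs(x_2)-\alpha/4 \bigr\},
\end{align*}
which are disjoint relatively weakly open subsets of $B_{C_0(K,X)}$ containing $g_1,g_2$ respectively, so $C:=\tfrac12 W_1+\tfrac12 W_2$ contains $g\in S_{C_0(K,X)}$, and the task reduces to showing that $g$ is not interior to $C$ in the relative weak topology.

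To finish, I would take any relatively weakly open neighbourhood $V$ of $g$ in $B_{C_0(K,X)}$; since the weak topology is coarser than the norm topology, $V$ is norm-open and contains some $B(g,\delta)\cap B_{C_0(K,X)}$. The hypothesis lets me choose $e\in\ext B_X$ with $\|e-x\|<\delta$, and then $h(t):=\phi(t)e$ defines an $h\in B_{C_0(K,X)}$ with $\|h-g\|_\infty\le\|e-x\|<\delta$, so $h\in V$, while $h(t_0)=e$ is extreme. Any decomposition $h=\tfrac12 h_1+\tfrac12 h_2$ with $h_j\in W_j$ would, upon evaluation at $t_0$, force $h_1(t_0)=h_2(t_0)=e$ by extremality; the defining inequalities of $W_1$ and $W_2$ would then simultaneously demand $\re\xs(e)<\re\xs(x_1)+\alpha/4$ and $\re\xs(e)>\re\xs(x_1)+3\alpha/4$, which is impossible. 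Hence $h\in V\setminus C$, proving (c). The main obstacle is the transplantation step, namely producing $h\in V$ with $h(t_0)\in\ext B_X$: it is handled by combining the norm-closeness of extreme points provided by the hypothesis with the observation that weak neighbourhoods in $C_0(K,X)$ are automatically norm neighbourhoods, so no finer control over the shape of $V$ is required.
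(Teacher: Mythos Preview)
Your argument is correct. Part (a) is essentially the paper's ``obvious'' observation, made explicit. The substantive difference is in how (b) and (c) are obtained.

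The paper proves (b) directly---constructing a functional $\xs$ via Hahn--Banach so that the slices $S_1 = S(\xs,\alpha)$ and $S_2 = S(-\xs,\alpha)$ witness the failure---and then obtains (c) from (b) together with the stability result that property \CWOSX passes to one-complemented subspaces (Proposition~\ref{prop:norm1compl-inherit-cwox}), using that $X$ sits one-complementedly in $C_0(K,X)$. You instead prove (c) directly with an explicit Urysohn bump and recover (b) as the one-point case. Your route is a little longer but entirely self-contained: it does not invoke the tensor-product identification $C_0(K,X)=C_0(K)\iten X$ nor the separate stability proposition. The paper's route is shorter precisely because it outsources the passage from $X$ to $C_0(K,X)$ to that general machinery. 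Both approaches rest on the same core idea---an extreme point arbitrarily close to $x$ cannot be split nontrivially, yet the two halves of the convex combination are forced apart by the separating functional---so the difference is organisational rather than conceptual.

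Two cosmetic remarks: your phrase ``$V$ is norm-open'' should strictly read ``relatively norm-open in $B_{C_0(K,X)}$,'' though the paper uses the same shortcut; and the disjointness of $W_1$ and $W_2$, while true, is not actually needed for the argument.
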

\begin{proof}
  \ref{item:1-linesegment-in-sphere}
  is obvious.

  \medskip
  \ref{item:2-linesegment-in-sphere}.
  By assumption there are $x_1, x_2 \in S_X$,
  $x_1 \neq x_2$, such that
  $x = \frac{1}{2}x_1 + \frac{1}{2}x_2$.
  Define a linear functional
  $g : \spann\{x, x_1 - x_2\} \to \K$ by
  $g(x)=0$ and $g(x_1 - x_2)=1$
  (observe that the elements $x$
  and $x_1 - x_2$ are linearly independent).
  Letting $x^* \in X^*$ be any norm preserving extension of
  $\frac{g}{\|g\|}$, one has $x^* \in S_{X^*}$, $x^*(x) = 0$,
  and $x^*(x_1) = 2\alpha$, $x^*(x_2) = -2\alpha$ for some $\alpha > 0$.
  Consider the slices
  \begin{equation*}
    S_1 := \{a \in B_X : \re x^*(a) > \alpha\}
    \quad \text{and} \quad
    S_2 := \{a \in B_X : \re(-x^*)(a) > \alpha\}.
  \end{equation*}
  Then $x_j \in S_j$, $j \in \{1,2\}$, and thus
  $x \in \frac{1}{2}S_1 + \frac{1}{2}S_2$.

  Let $U$ be an arbitrary neighbourhood of $x$ in the relative weak
  topology of $B_X$.
  Then there is a $\delta > 0$ such that
  $B(x, \delta) \cap B_X \subset U$.
  We may assume that $\delta < \alpha$.

  By assumption,
  there exists $u \in \ext B_X \cap B(x,\delta) \subset U$.
  Suppose that $u = \frac{1}{2}u_1 + \frac{1}{2}u_2$
  with $u_j \in S_j$, $j \in \{1,2\}$. Since $u$ is an extreme point,
  $u_1 = u_2 = u$. But
  \begin{equation*}
    |\re x^*(u)| \le |\re x^*(x)| + \|u - x\|
    < \delta < \alpha,
  \end{equation*}
  hence $u_j \notin S_j$, $j \in \{1,2\}$, and
  $u \notin \frac{1}{2} S_1 + \frac{1}{2} S_2$.
  Thus $x$ is not an interior point of
  $\frac{1}{2} S_1 + \frac{1}{2} S_2$
  in the relative weak topology of $B_X$.

  \medskip
  \ref{item:3-linesegment-in-sphere}
  follows from
  \ref{item:2-linesegment-in-sphere}
  and Proposition~\ref{prop:norm1compl-inherit-cwox}
  since $C_0(K,X) = C_0(K) \iten X$ contains
  a one complemented copy of $X$.
\end{proof}

We now give a concrete example
of the phenomenon described in Proposition \ref{prop: a class of Banach spaces for which C_0(K,X) is never CWO1 (and X fails (P10))}.

\begin{example}\label{example: a space X without (P10) such that C_0(K,X) is not CWO1}
  Let $X$ be the Banach space $\R^3$ whose unit ball is
  \begin{align*}
    B_X =\conv \bigl((B_{\ell_2^3} - e_1) \cup B_{\ell_\infty^3} \cup (B_{\ell_2^3} + e_1)\bigr),
  \end{align*}
  where $e_1 = (1, 0, 0)$. Then the point $(1,0,1)\in S_X$
  is not an extreme point of $B_X$ (because it lies on the line segment
  connecting the points $(1,-1,1)\in S_X$ and $(1,1,1)\in S_X$),
  but it has extreme points of $B_X$ arbitrarily  close to it.
\end{example}

Let $X$ be a finite dimensional Banach space.
If $X$ is a real Banach space, then
$X$ is called \emph{polyhedral} if $\ext B_X$ is a finite set.
For a complex Banach space $X$,
following \cite{MR0454597}, we say that
$B_X$ is a \emph{complex polytope} if
there exists a finite set $A \subset \ext B_X$
such that $\ext B_X = \T \cdot A$.
We will say that
$X$ is \emph{polyhedral} if $B_X$ is a complex polytope.

\begin{prop}\label{prop:polyhedral-dual-ccom}
  Let $X$ be a finite dimensional Banach space
  such that the dual $X^*$ is polyhedral.
  Then $X$ has property \CCOMtwo (and hence property~\CCOM).
\end{prop}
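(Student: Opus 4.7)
\smallskip
\noindent\textbf{Proof plan.} The plan is to reduce, via Proposition~\ref{prop: X has (co) <=> X has (co2)} combined with Proposition~\ref{prop:extB_X_and_points_in_B^0_X_have_P11}~\ref{item:prop_P11-1}, to checking property \CCOMtwo at a non-extreme boundary point $x\in S_X$, and then to exhibit $\widehat{v}_1,\widehat{v}_2$ via an explicit affine-in-$u$ formula using the polyhedral description of $B_X$. Polyhedrality of $X^*$ means $\ext B_{X^*}=\T\cdot\{g_1,\dots,g_N\}$ for some finite $\{g_1,\dots,g_N\}\subset S_{X^*}$; consequently $B_{X^*}=\cabsconv\{g_1,\dots,g_N\}$, $\|y\|=\max_{1\leq k\leq N}|g_k(y)|$, and
\begin{equation*}
B_X=\bigcap_{k=1}^{N}\{y\in X:|g_k(y)|\leq 1\}.
\end{equation*}

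Let $x=\lambda_1 x_1+\lambda_2 x_2\in S_X$ with $x_1,x_2\in B_X$ and $\lambda_1,\lambda_2>0$. The first observation I would record is a phase-matching property: for every $k$ in the active set $I:=\{k:|g_k(x)|=1\}$ the value $g_k(x)$ is an extreme point of $\D$, so the identity $g_k(x)=\lambda_1 g_k(x_1)+\lambda_2 g_k(x_2)$ with $g_k(x_j)\in\D$ forces $g_k(x_1)=g_k(x_2)=g_k(x)$. I would then set
\begin{equation*}
\gamma:=\min\{1-|g_k(x)|:k\notin I\}>0,\qquad M:=\max_{k}\|g_k\|,\qquad \kappa:=M/\gamma
\end{equation*}
(with $\gamma:=1$ if $I=\{1,\dots,N\}$), and pick $\delta>0$ so that $\kappa\delta<1$ and $(2\kappa+1)\delta<\eps$.

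On $B:=B(x,\delta)\cap B_X$ define
\begin{equation*}
\widehat{v}_j(u):=x_j+\kappa\|u-x\|\,(x-x_j)+(u-x),\qquad j\in\{1,2\}.
\end{equation*}
Each $\widehat{v}_j$ is continuous in $u$; by inspection $\lambda_1\widehat{v}_1(u)+\lambda_2\widehat{v}_2(u)=x+(u-x)=u$ and $\|\widehat{v}_j(u)-x_j\|\leq(2\kappa+1)\|u-x\|<\eps$. To verify $\widehat{v}_j(u)\in B_X$ put $\mu:=\kappa\|u-x\|$ and write $g_k(\widehat{v}_j(u))=(1-\mu)g_k(x_j)+\mu g_k(x)+g_k(u-x)$: for $k\in I$, phase-matching gives $g_k(\widehat{v}_j(u))=g_k(x)+g_k(u-x)=g_k(u)$, whose modulus is at most $1$; for $k\notin I$, the triangle inequality together with $|g_k(x)|\leq 1-\gamma$ yields
\begin{equation*}
|g_k(\widehat{v}_j(u))|\leq (1-\mu)+\mu(1-\gamma)+|g_k(u-x)|=1-\mu\gamma+|g_k(u-x)|\leq 1,
\end{equation*}
the last inequality holding because the choice of $\kappa$ guarantees $\mu\gamma=M\|u-x\|\geq\|g_k\|\|u-x\|\geq|g_k(u-x)|$.

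The main obstacle is the case where some index $k$ is active at $x_j$ but slack at $x$, for which the naive translation $\widehat{v}_j(u)=x_j+(u-x)$ immediately leaves $B_X$. Polyhedrality of $X^*$ is used in exactly one place, namely to extract a single uniform positive gap $\gamma$ from the finitely many slack constraints, and the correction term $\kappa\|u-x\|(x-x_j)$ then pulls $x_j$ just enough toward $x$ to absorb the excursion $|g_k(u-x)|$. No step depends on the scalar field, so the same formula handles the real and complex cases simultaneously.
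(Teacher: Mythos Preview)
Your proof is correct and follows essentially the same route as the paper's: both exploit the phase-matching identity $g_k(x_j)=g_k(x)$ at active indices together with a uniform positive gap $\gamma$ (the paper's $\sigma$) at the slack ones, and then define $\widehat{v}_j(u)$ by pulling $x_j$ toward $x$ and translating by $u-x$. The only difference is cosmetic: the paper skips the reduction to non-extreme sphere points and uses a \emph{constant} retraction factor $r=\eps/2$, setting $\widehat{v}_j(u):=x_j+r(x-x_j)+(u-x)$, which is genuinely affine in $u$---your version, carrying the factor $\kappa\|u-x\|$, is continuous but not affine, so your description ``affine-in-$u$'' is a slight misnomer.
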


\begin{proof}
  Let $\varepsilon > 0$ and define $r := \frac{\varepsilon}{2}$.
  Let $\ext B_{X^*} / \T = \{\phi_1,\ldots,\phi_m\}$.

  Let $x \in B_X$.
  Assume that $x_1, x_2 \in B_X$ and $\lambda_1, \lambda_2 > 0$,
  with $\lambda_1 + \lambda_2 = 1$, are such that
  $x = \lambda_1 x_1 + \lambda_2 x_2$.
  Define $J := \{n : |\phi_n(x)| < 1\}$ and
  $\sigma := \min_{n \in J} (1 - |\phi_n(x)|) > 0$.
  Choose $\delta > 0$ such that
  $\delta < r\sigma$.
  Set $B := B(x,\delta) \cap B_X$ and
  define functions $\widehat{v}_1,\widehat{v}_2 : B \to B_X$  by
  \begin{equation*}
    \widehat{v}_j(u) := x_j + r ( x - x_j ) + (u - x),\quad j \in \{1,2\}.
  \end{equation*}
    It is trivial that $\widehat{v}_1$ and $\widehat{v}_2$
  are continuous.
  We have
  \begin{equation*}
    \lambda_1 \widehat{v}_1(u)
    +
    \lambda_2 \widehat{v}_2(u)
    =
    x + r ( x - x ) + (u - x) = u
  \end{equation*}
  and
  \begin{equation*}
    \|\widehat{v}_j(u) - x_j\| = \|r(x - x_j) + (u - x)\|
    \le r(1-\sigma) +r +\delta
    < 2r = \varepsilon.
  \end{equation*}

  It remains to show that $\widehat{v}_1,\widehat{v}_2(u) \in B_X$
  for all $u\in B$. Let $j \in \{1,2\}$, $u\in B$, $n\in\{1,\dotsc,m\}$,
  and $\alpha\in\T$ be arbitrary.

  If $|\phi_n(x)| = 1$, then $|(\alpha \phi_n)(x)| = 1$
  and
  \begin{equation*}
    (\alpha \phi_n)(x)
    = \lambda_1 (\alpha \phi_n)(x_1)
    + \lambda_2 (\alpha \phi_n)(x_2).
  \end{equation*}
  This means that $(\alpha \phi_n)(x) \in \T$
  has been written as a convex combination of elements
  in $\D$.
  But every point in $\T$ is an extreme point
  in $\D$,
  hence $(\alpha \phi_n)(x_j) = (\alpha \phi_n)(x)$, $j \in \{1,2\}$.
  Since
  \begin{equation*}
    \widehat{v}_j(u)
    = (1-r)(x_j - x) + u,
  \end{equation*}
  we have
  \begin{equation*}
    |(\alpha \phi_n)(\widehat{v}_j(u))|
    \le (1-r)|(\alpha \phi_n)(x_j - x)|
    + |\phi_n(u)|
    = 0 + |\phi_n(u)| \le 1.
  \end{equation*}

  If $|\phi_n(x)| < 1$, then we use the fact that
  \begin{align*}
    \widehat{v}_j(u)
    = (1-r)x_j + r x + (u - x)
  \end{align*}
  and get
  \begin{align*}
    |(\alpha \phi_n)(\widehat{v}_j(u))|
    &\le (1-r)|\phi_n(x_j)| + r|\phi_n(x)| + |\phi_n(u - x)| \\
    &\le (1-r) + r(1-\sigma) + \delta \\
    &\le 1 - r\sigma + r\sigma = 1.
  \end{align*}

  In conclusion, $|\phi(\widehat{v}_j(u))| \le 1$
  for all $\phi \in \ext B_{X^*}$;
  hence $\widehat{v}_j(u) \in B_X$.
\end{proof}

Note that both real and complex $\ell_1^n$ are polyhedral,
and while real $\ell_\infty^n$ is polyhedral,
complex $\ell_\infty^n$ is not.
We will however prove that complex $\ell_1^n$
has property \CCOM in Theorem~\ref{thm:complex-ell_1^n}
below.

Recall that, by
Proposition~\ref{prop:extB_X_and_points_in_B^0_X_have_P11},
\ref{item:prop_P11-1},
every norm-less-than-one point of any Banach space has property
\CCOM (and hence property \CCOMtwo).
Next we give a necessary and
sufficient condition for norm-one points in Banach spaces
to have property \CCOMtwo, which is easier to verify that the condition
from Definition~\ref{defn:P10} (and Definition~\ref{def:P11}).
More precisely, we show that it is enough to
define the functions from Definition~\ref{def:P11}
on a neighbourhood of the norm-one point on the sphere
and not a neighbourhood in the unit ball.
This result will be applied to show that,
for any $n \in \N$,
the complex space $\ell_1^n$ has property \CCOMtwo.

\begin{thm}\label{thm:P10-for-S_X}
  Let $X$ be a Banach space and let $x\in S_X$. The following
  assertions are equivalent:
  \begin{enumequi}
  \item\label{item:1-thm-P10-S_X}
    $x$ has property \CCOMtwo;
  \item\label{item:2-thm-P10-S_X}
    whenever $x_1, x_2 \in S_X$, $x_1 \ne x_2$, and
    $\lambda_1, \lambda_2 > 0$, $\lambda_1 + \lambda_2 = 1$,
    are such that $x = \lambda_1 x_1 + \lambda_2 x_2$,
    and $\eps>0$,
    there is a $\delta>0$ such that, setting $S = B(x,\delta)\cap
    S_X$, there are continuous functions
    $\mathfrak{v}_1,\mathfrak{v}_2 : S \to B_X$
    such that, for every $u \in S$,
    \begin{equation}\label{eq:P10-S_X-conditions}
      u = \lambda_1 \mathfrak{v}_1(u)
      + \lambda_2 \mathfrak{v}_2(u)
      \quad \text{and} \quad
      \|\mathfrak{v}_j(u) - x_j\| < \eps,
      \quad j \in \{1,2\}.
    \end{equation}
  \end{enumequi}
\end{thm}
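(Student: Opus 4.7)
\medskip

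The implication \ref{item:1-thm-P10-S_X}$\Rightarrow$\ref{item:2-thm-P10-S_X} is immediate: given the functions $\widehat{v}_1,\widehat{v}_2$ from \CCOMtwo on $B:=B(x,\delta)\cap B_X$, simply restrict them to $S:=B(x,\delta)\cap S_X\subset B$ to obtain $\mathfrak{v}_1,\mathfrak{v}_2$. The real content is the converse. The first observation I would record is that, since $x\in S_X$, any convex combination $x=\lambda_1 x_1+\lambda_2 x_2$ with $x_1,x_2\in B_X$ forces $\|x_1\|=\|x_2\|=1$: indeed $1=\|x\|\leq\lambda_1\|x_1\|+\lambda_2\|x_2\|\leq 1$ forces equality. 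So in checking \CCOMtwo for $x$ we only ever need to split $x$ as a convex combination of points on $S_X$; the sub-case $x_1=x_2$ (hence $x_1=x_2=x$) is handled trivially by $\widehat{v}_j(u):=u$.

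For the main sub-case $x_1\ne x_2$ in $S_X$, the plan is a \emph{radial projection trick}: use \ref{item:2-thm-P10-S_X} to construct functions on the sphere and then push them into the ball. Given $\eps>0$, first apply \ref{item:2-thm-P10-S_X} with $\eps/2$ in place of $\eps$ to produce some $\delta_0>0$ and continuous $\mathfrak{v}_1,\mathfrak{v}_2\colon B(x,\delta_0)\cap S_X\to B_X$ satisfying \eqref{eq:P10-S_X-conditions} with $\eps/2$. Then choose
\begin{equation*}
  0<\delta<\min\bigl\{\tfrac{\delta_0}{2},\tfrac{\eps}{2}\bigr\},
\end{equation*}
set $B:=B(x,\delta)\cap B_X$, and define
\begin{equation*}
  \widehat{v}_j(u):=\|u\|\,\mathfrak{v}_j\!\left(\frac{u}{\|u\|}\right),
  \qquad u\in B,\ j\in\{1,2\}.
\end{equation*}
This is well-defined and continuous because $\|u\|\geq 1-\delta>0$ on $B$.

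To verify that it does the job one checks three things. First, $u/\|u\|$ lies in the domain of the $\mathfrak{v}_j$: since $\|u/\|u\|-u\|=1-\|u\|\leq\delta$ and $\|u-x\|<\delta$, the triangle inequality gives $\|u/\|u\|-x\|<2\delta<\delta_0$. Second, $\widehat{v}_j(u)\in B_X$ since $\|\widehat{v}_j(u)\|\leq\|u\|\cdot 1\leq 1$, and the sum condition is automatic:
\begin{equation*}
  \lambda_1\widehat{v}_1(u)+\lambda_2\widehat{v}_2(u)
  =\|u\|\bigl(\lambda_1\mathfrak{v}_1(u/\|u\|)+\lambda_2\mathfrak{v}_2(u/\|u\|)\bigr)
  =\|u\|\cdot\frac{u}{\|u\|}=u.
\end{equation*}
Third, the closeness estimate:
\begin{equation*}
  \|\widehat{v}_j(u)-x_j\|
  \leq\|u\|\,\bigl\|\mathfrak{v}_j(u/\|u\|)-x_j\bigr\|+\bigl(1-\|u\|\bigr)\|x_j\|
  <\tfrac{\eps}{2}+\delta<\eps.
\end{equation*}

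The step I expect to require the most care is the bookkeeping around continuity and the chain of norm estimates — in particular making sure that the sphere-functions are applied only to points that genuinely lie in $B(x,\delta_0)\cap S_X$, which is the reason for the factor $\tfrac{1}{2}$ in the choice of $\delta$. Beyond that, the argument is essentially a radial homotopy between $x$ (on the sphere) and interior perturbations, and no further ideas appear to be needed.
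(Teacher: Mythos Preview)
Your proposal is correct and follows essentially the same strategy as the paper's proof: reduce to $x_1,x_2\in S_X$, handle $x_1=x_2$ trivially, and for $x_1\ne x_2$ define $\widehat{v}_j(u)=\|u\|\,\mathfrak{v}_j(u/\|u\|)$ via radial projection. Your bookkeeping is in fact slightly tighter than the paper's (you get away with $\delta<\delta_0/2$ where the paper uses $\gamma/4$, and you appeal directly to continuity of $u\mapsto u/\|u\|$ on $\{u:\|u\|>0\}$ rather than writing out an explicit $\eps$--$\delta$ argument); the only cosmetic omission is to record $\delta<1$ so that $1-\delta>0$ is justified.
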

\begin{proof}
  \ref{item:1-thm-P10-S_X} $\Rightarrow$ \ref{item:2-thm-P10-S_X}
  is obvious.

  \medskip
  \ref{item:2-thm-P10-S_X} $\Rightarrow$ \ref{item:1-thm-P10-S_X}.
  Let $x_1, x_2 \in B_X$ and $\lambda_1, \lambda_2 > 0$,
  $\lambda_1 + \lambda_2 = 1$, be such that
  $x = \lambda_1 x_1 + \lambda_2 x_2$,
  and let $0 < \eps < 1$.
  Then, in fact, $x_1, x_2 \in S_X$.

  First consider the case when $x_1 = x_2$; then also $x = x_1$.
  Taking $\delta = \eps$ and defining
  $\widehat{v}_1(u) =\widehat{v}_2(u)= u$
  for every $u \in B := B(x,\delta) \cap B_X$,
  the conditions \eqref{eq:P11-conditions}
  hold, hence $x$ has property \CCOMtwo.

  Now suppose that $x_1 \ne x_2$; then, in fact, $x_1 \ne x \ne x_2$.
  By our assumption, there is a $\gamma \in (0,\eps)$ such that,
  setting $S := B(x,\gamma) \cap S_X$,
  there are continuous functions
  $\mathfrak{v}_1,\mathfrak{v}_2 : S \to B_X$
  satisfying
  \eqref{eq:P10-S_X-conditions}  with $\eps$ replaced by~$\frac{\eps}{2}$
  for every $u \in S$.

  Set $C := \{\alpha u \colon\,\alpha \in [0,1], \,u \in S\}$
  and $\delta = \frac{\gamma}{4}$.
  Observe that $B := B(x,\delta) \cap B_X \subset C$.
  Indeed, suppose that $a \in B$. Since
  \begin{equation*}
    \delta > \|x-a\| \geq \|x\| - \|a\| = 1 - \|a\|,
  \end{equation*}
  one has $\|a\| > 1 - \delta > \frac{1}{2}$.
  For $u := \frac{a}{\|a\|}$,
  one has $a = \|a\|u$ and $u \in S$, because
  \begin{align*}
    \|u-x\|
    &\leq
      \Bigl\| \dfrac{a}{\|a\|} - \dfrac{x}{\|a\|} \Bigr\|
      + \Bigl\| \dfrac{x}{\|a\|} - x \Bigr\|
      = \dfrac{\|a-x\|}{\|a\|} + \dfrac{1-\|a\|}{\|a\|}
      < \dfrac{2\delta}{\|a\|}\\
    &< 4 \delta
      = \gamma.
  \end{align*}

  Since every $a \in B$ has a unique representation $a = \alpha u$,
  where $\alpha \in (0,1]$ and $u \in S$,
  the functions $\widehat{v}_1,\widehat{v}_2 : B \to B_X$ defined by
  \begin{equation}\label{eq:homogeneous_def_of_widehatv_j}
    \widehat{v}_j(a) =
    \widehat{v}_j(\alpha u)
    := \alpha \mathfrak{v}_j(u),
    \quad
    j \in \{1,2\},
  \end{equation}
  are well defined.
  We now show that these functions are continuous.
  To this end, let $\alpha_0 u_0 \in B$
  ($\alpha_0 \in (0,1]$, $u_0 \in S$)
  and $\beta > 0$.
  By the continuity of $\mathfrak{v}_1$ and $\mathfrak{v}_2$,
  there is a $\delta_0 > 0$ such that,
  whenever $u \in S$ satisfies $\|u - u_0\| < \delta_0$,
  one has
  $\|\mathfrak{v}_j(u) - \mathfrak{v}_j(u_0)\| < \frac{\beta}{2}$,
  $j \in \{1,2\}$.
  Suppose that $\alpha u \in B$
  ($\alpha \in (0,1]$, $u \in S$) is such that
  $\|\alpha u - \alpha_0 u_0\| <
  \min \bigl\{ \frac{\delta_0}{4}, \frac{\beta}{2} \bigr\}$.
  Then also
  \begin{equation*}
    \min \Bigl\{ \dfrac{\delta_0}{4}, \dfrac{\beta}{2}\Bigr\}
    > \|\alpha u - \alpha_0 u_0\|
    \geq \bigl| \|\alpha u\| - \|\alpha_0 u_0\| \bigr|
    =| \alpha - \alpha_0 |
  \end{equation*}
  and, since $\alpha = \|\alpha u\| > \frac{1}{2}$,
  \begin{align*}
    \|u-u_0\|
    &\leq \frac{1}{\alpha}
      \bigl(\| \alpha u - \alpha_0 u_0\|
      +|\alpha_0 - \alpha|\,\|u_0\|\bigr)
      < 2 \Bigl(\dfrac{\delta_0}{4} + \dfrac{\delta_0}{4}\Bigr)
      = \delta_0.
  \end{align*}
  Thus
  \begin{align*}
    \|\widehat{v}_j(\alpha u) - \widehat{v}_j(\alpha_0 u_0)\|
    &=
    \|\alpha \mathfrak{v}_j(u) - \alpha_0 \mathfrak{v}_j(u_0)\|
    \\
    &\leq
      |\alpha - \alpha_0|\,\|\mathfrak{v}_j(u)\|
      + \alpha_0\| \mathfrak{v}_j(u) - \mathfrak{v}_j(u_0)\|
      < \frac{\beta}{2} + \frac{\beta}{2}
      = \beta.
  \end{align*}
  It follows that the functions
  $\widehat{v}_1,\widehat{v}_2 : B \to B_X$ are continuous.

  It remains to observe that,
  whenever $\alpha u \in B$ ($\alpha \in (0,1]$, $u \in S$), one has
  \begin{equation*}
    \lambda_1(\alpha \mathfrak{v}_1(u))
    +
    \lambda_2(\alpha \mathfrak{v}_2(u))
    = \alpha
    (\lambda_1\mathfrak{v}_1(u) + \lambda_2\mathfrak{v}_2(u))
    = \alpha u,
  \end{equation*}
  and, since $\alpha = \|\alpha u\| > 1 - \delta$,
  \begin{align*}
    \|\alpha \mathfrak{v}_j(u) - x_j\|
    \leq (1 - \alpha) \|\mathfrak{v}_j(u)\|
    + \|\mathfrak{v}_j(u) - x_j\|
    < \delta + \frac{\eps}{2}
    < \eps,\quad j \in \{1,2\}.
  \end{align*}
\end{proof}

\begin{prop}\label{prop:every-2-dim-is-CCOM}
Let $X$ be a two dimensional real Banach space. Then $X$ has property \CCOM.
\end{prop}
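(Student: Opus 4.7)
First, I would reduce the problem: by Proposition~\ref{prop: X has (co) <=> X has (co2)} it suffices to prove property \CCOMtwo for $X$, and by Proposition~\ref{prop:extB_X_and_points_in_B^0_X_have_P11}\,\ref{item:prop_P11-1} every $x\in B_X$ with $\|x\|<1$ or $x\in\ext B_X$ already has \CCOMtwo. This leaves only $x\in S_X\setminus\ext B_X$, and Theorem~\ref{thm:P10-for-S_X} lets me build the required maps on a spherical neighbourhood $S=B(x,\delta)\cap S_X$ for a decomposition $x=\lambda_1 x_1+\lambda_2 x_2$ with $x_1,x_2\in S_X$, $x_1\neq x_2$.

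Next I would extract the geometric structure. Being non-extreme in a real 2-dimensional space, $x$ lies in the relative interior of a maximal line segment $[a,b]\subset S_X$ with $a,b\in\ext B_X$. For the given $x_1,x_2$, the convex function $t\mapsto\|(1-t)x_1+tx_2\|$ attains the value $1$ at $t=0,\lambda_2,1$, so it is identically $1$ on $[0,1]$; thus $[x_1,x_2]\subset S_X$. Since the boundary of a 2-dimensional convex body has a unique supporting line at every non-extreme point, $[x_1,x_2]$ and $[a,b]$ lie on this common line through $x$, and maximality of $[a,b]$ forces $[x_1,x_2]\subset[a,b]$. Parametrizing by $t\mapsto(1-t)a+tb$, I write $x=(1-s)a+sb$ and $x_j=(1-s_j)a+s_jb$ with $s\in(0,1)$, $s_j\in[0,1]$, and $s=\lambda_1 s_1+\lambda_2 s_2$. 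Shrinking $\delta$ I can arrange that every $u\in S$ lies on $[a,b]$ and has coordinate $r\in[0,1]$ close to $s$.

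The heart of the proof is a one-dimensional continuous selection. For $r\in[0,1]$ the fibre $F(r):=\{(t_1,t_2)\in[0,1]^2:\lambda_1 t_1+\lambda_2 t_2=r\}$ is a closed segment whose endpoints $P(r),Q(r)$ are explicit piecewise linear continuous functions of $r$. Writing $(s_1,s_2)=(1-t_0)P(s)+t_0Q(s)$ for some $t_0\in[0,1]$, I set
\[
(r_1(r),r_2(r)):=(1-t_0)P(r)+t_0Q(r),
\]
which is continuous in $r$, lies in $[0,1]^2$ by convexity, satisfies $\lambda_1 r_1(r)+\lambda_2 r_2(r)=r$, and equals $(s_1,s_2)$ at $r=s$. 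Finally $\mathfrak v_j(u):=(1-r_j(r))a+r_j(r)b\in[a,b]\subset B_X$ is continuous in $u$, obeys $\lambda_1\mathfrak v_1(u)+\lambda_2\mathfrak v_2(u)=u$, and $\|\mathfrak v_j(u)-x_j\|=|r_j(r)-s_j|\cdot\|b-a\|<\eps$ for $\delta$ chosen small enough.

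The main obstacle is exactly this selection: the naive affine formula $r_j(r):=s_j+(r-s)$ respects the linear constraint but leaves $[0,1]$ precisely when $s_j\in\{0,1\}$, i.e.\ when $x_j$ coincides with an endpoint of $[a,b]$. Using the fibre-endpoint parametrization via $P(r),Q(r)$ handles all boundary cases uniformly, at the cost of a small explicit check that $P,Q$ are piecewise linear and continuous on $[0,1]$.
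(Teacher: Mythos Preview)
Your reduction via Proposition~\ref{prop: X has (co) <=> X has (co2)}, Proposition~\ref{prop:extB_X_and_points_in_B^0_X_have_P11}, and Theorem~\ref{thm:P10-for-S_X} is exactly the paper's route, and your identification of the key geometric fact---that in a real two-dimensional space the sphere is locally a straight segment near any non-extreme point---is the same ingredient the paper isolates in its Claim. Your fibre-endpoint selection $(r_1,r_2)=(1-t_0)P(r)+t_0Q(r)$ is correct and handles the boundary cases $s_j\in\{0,1\}$ cleanly, but it is more machinery than is needed. The paper works directly with the direction $a:=(x-x_1)/\|x-x_1\|$ (without passing to the maximal face $[a,b]$) and simply sets
\[
\widehat v_1(x+ta)=x_1+\tfrac{\delta}{\lambda_1}a+ta,\qquad
\widehat v_2(x+ta)=x_2-\tfrac{\delta}{\lambda_2}a+ta,
\]
i.e.\ it first nudges each $x_j$ a fixed amount toward $x$ along the segment and then applies the same translation $u-x=ta$. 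This is precisely the ``naive affine formula'' you mention, \emph{preceded by a small inward shift}, and that shift alone is enough to keep the images on the segment; the assumption $\eps<d:=\|x-x_1\|\le\|x-x_2\|$ together with $\delta<\min\{\lambda_1\eps/2,\lambda_2\eps/2\}$ makes the verification immediate. So both approaches are valid; yours is a general continuous-selection argument, while the paper's is a one-line explicit formula that avoids the piecewise analysis of $P(r),Q(r)$.
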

\begin{proof}
We are going to apply Proposition \ref{prop: X has (co) <=> X has (co2)}
teamed with Theorem~\ref{thm:P10-for-S_X}.
Let $x,x_1,x_2\in S_X$ with $x_1\ne x_2$ and $\lambda_1,\lambda_2>0$
with $\lambda_1+\lambda_2=1$ be such that $x=\lambda_1 x_1+\lambda_2 x_2$,
and let $\eps>0$. We may assume that $d:=\|x-x_1\|\leq\|x-x_2\|$
(or, equivalently, $\lambda_2\leq\lambda_1$)
and that $\eps<d$. Set $a:=\frac{x-x_1}{\|x-x_1\|}$.
Observe that $d\leq1$, and $\|x+ta\|=1$ whenever $|t|\leq d$.
We shall make use of the following claim which is easy to believe
and not much harder to prove.

\begin{Claim}
There is a $\gamma>0$ such that, whenever $0<\delta\leq\gamma$, one has
\begin{equation}\label{eq: B(x,delta) cap S_X = ...}
S_\delta:=B(x,\delta)\cap S_X=\{x+ta\colon t\in(-\delta,\delta)\}.
\end{equation}
\end{Claim}

\noindent%
Letting $0<\delta<\min\{\gamma,\frac{\lambda_1\eps}{2},
\frac{\lambda_2\eps}{2}\}$, where $\gamma>0$ comes from Claim,
we can now define functions $\widehat{v}_1,\widehat{v}_2\colon S_\delta\to B_X$
by
\[
\widehat{v}_1(x+ta)=x_1+\frac{\delta}{\lambda_1}a+ta,
\quad
\widehat{v}_2(x+ta)=x_2-\frac{\delta}{\lambda_2}a+ta,
\qquad t\in(-\delta,\delta).
\]

It remains to prove Claim.
First observe that the elements $x$ and $a$ are linearly independent.
Since all norms on $X$ are equivalent,
there is a $\gamma>0$ such that
\[
D:=\bigl\{b_{st}:=sx+ta\colon
s,t\in\bigl[-\tfrac{d}{2},\tfrac{d}{2}\bigr]\bigr\}
\supset \gamma B_X
\]
(observe that $\gamma\leq\tfrac{d}{2}$).
Now suppose that $0<\delta\leq\gamma$.
Since $B(x,\delta)=x+\delta B_X\subset x+D$,
every $z\in B(x,\delta)$ can be represented as $z=x+b_{st}$,
where $s,t\in[-\frac{d}{2},\frac{d}{2}]$.
Since $\bigl|\frac{t}{1+s}\bigr|\leq d$, one has
\[
\|x+b_{st}\|=\|(1+s)x+ta\|=(1+s)\|x+\tfrac{t}{1+s}\,a\|=1+s,
\]
hence $\|x+b_{st}\|>1$ if $s>0$, and $\|x+b_{st}\|<1$ if $s<0$; thus
\[
B(x,\delta)\cap S_X
\subset\bigl\{x+ta\colon t\in\bigl[-\tfrac{d}{2},\tfrac{d}{2}\bigr]\bigr\}.
\]
Since $\{x+ta\colon |t|\geq\delta\}\cap B(x,\delta)=\emptyset$ and
$\{x+ta\colon |t|<\delta\}\subset B(x,\delta)\cap S_X$,
the equality \eqref{eq: B(x,delta) cap S_X = ...} follows.
\end{proof}

We have already seen, in Proposition~\ref{prop:polyhedral-dual-ccom},
that $\ell_1^n$ over the real scalars has property \CCOM.
Next we show that this is also true for
complex $\ell_1^n$.

\begin{thm}\label{thm:complex-ell_1^n}
  Let $n \in \N$.
  Then the complex space $\ell_1^n$ has property \CCOM.
\end{thm}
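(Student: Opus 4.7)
The plan is to reduce the statement to property \CCOMtwo via Proposition~\ref{prop: X has (co) <=> X has (co2)}, then via Proposition~\ref{prop:extB_X_and_points_in_B^0_X_have_P11}, \ref{item:prop_P11-1} to the verification at an arbitrary $x\in S_{\ell_1^n}$, and further via Theorem~\ref{thm:P10-for-S_X} to constructing the functions $\mathfrak{v}_1,\mathfrak{v}_2$ only on $S=B(x,\delta)\cap S_{\ell_1^n}$. Fix $x=\lambda_1 x_1+\lambda_2 x_2$ with $x_1,x_2\in S_{\ell_1^n}$, $x_1\neq x_2$ and $\lambda_1,\lambda_2>0$ summing to $1$. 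Equality in the triangle inequality $\|x\|=\lambda_1\|x_1\|+\lambda_2\|x_2\|=1$ forces, at every coordinate $k$, that $x^{(k)}$, $x_1^{(k)}$, $x_2^{(k)}$ are non-negative multiples of a common unit scalar $\theta_k\in\T$. Since multiplying the $k$-th coordinate by $\bar\theta_k$ is an isometry of complex $\ell_1^n$, I may assume throughout that $x,x_1,x_2\in\R^n$ with non-negative entries; set $I=\{k:x^{(k)}>0\}$, so that $x_1^{(k)}=x_2^{(k)}=0$ for $k\notin I$.

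Next I observe that any admissible pair $(\mathfrak{v}_1(u),\mathfrak{v}_2(u))$ is tightly constrained: because $\sum_k|u^{(k)}|=\|u\|=1$ and $u^{(k)}=\lambda_1\mathfrak{v}_1(u)^{(k)}+\lambda_2\mathfrak{v}_2(u)^{(k)}$, the coordinatewise triangle inequality combined with $\|\mathfrak{v}_j(u)\|\le 1$ forces both $\|\mathfrak{v}_j(u)\|=1$ and, at each $k$ with $u^{(k)}\neq 0$, that $\mathfrak{v}_1(u)^{(k)}$ and $\mathfrak{v}_2(u)^{(k)}$ are non-negative real multiples of $u^{(k)}$. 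Accordingly I will seek $\mathfrak{v}_j(u)^{(k)}=\vartheta_k(u)\,y_j^{(k)}(u)$, where $\vartheta_k(u)=u^{(k)}/|u^{(k)}|$ when $u^{(k)}\neq 0$ (and $\vartheta_k(u)=1$ otherwise) and the $y_j^{(k)}(u)\ge 0$ obey $\lambda_1 y_1^{(k)}(u)+\lambda_2 y_2^{(k)}(u)=|u^{(k)}|$ coordinatewise and $\sum_k y_j^{(k)}(u)=1$ (equivalent for $j=1$ and $j=2$). Setting $p_k:=y_1^{(k)}$, the existence problem reduces to producing a continuous selection $p(u)$ from the compact convex polytope
\begin{equation*}
P(u):=\Bigl\{p\in\R^n:\ 0\le p_k\le |u^{(k)}|/\lambda_1\text{ for all }k,\ \textstyle\sum_k p_k=1\Bigr\},
\end{equation*}
with $p(x)=(x_1^{(k)})_{k=1}^n$, which belongs to $P(x)$ because $\lambda_1 x_1^{(k)}\le x^{(k)}$ and $\sum_k x_1^{(k)}=1$.

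The final step supplies such a selection by metric projection: $P(u)$ is non-empty (since $\sum_k|u^{(k)}|/\lambda_1=1/\lambda_1\ge 1$) and depends Hausdorff-continuously on $u$ because the defining box $\prod_k[0,|u^{(k)}|/\lambda_1]$ does so, hence $p(u):=\pi_{P(u)}\bigl((x_1^{(k)})_k\bigr)$ is continuous in $u$ and equals $(x_1^{(k)})_k$ at $u=x$. Setting $y_1^{(k)}(u):=p_k(u)$, $y_2^{(k)}(u):=(|u^{(k)}|-\lambda_1 p_k(u))/\lambda_2$ and $\mathfrak{v}_j(u)^{(k)}:=\vartheta_k(u)y_j^{(k)}(u)$ then yields continuous $\mathfrak{v}_j\colon S\to B_{\ell_1^n}$ for which the required identities are direct computations, and since $\vartheta_k(u)\to 1$ for $k\in I$ and $y_j^{(k)}(u)\to x_j^{(k)}$ for every $k$ as $u\to x$, closeness $\|\mathfrak{v}_j(u)-x_j\|\to 0$ follows. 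The main technical obstacle is justifying the Hausdorff continuity of $P(u)$ and the resulting continuity of the metric projection; both are standard in finite-dimensional convex analysis but deserve an explicit verification, and they can alternatively be replaced by a ``water-filling'' formula for $p(u)$ obtained by clipping each $x_1^{(k)}$ into $[0,|u^{(k)}|/\lambda_1]$ after a common Lagrange shift chosen to enforce $\sum_k p_k(u)=1$.
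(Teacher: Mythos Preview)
Your proof is correct and takes a genuinely different route from the paper's. After the same reductions (to \CCOMtwo, to $x\in S_X$, to functions on $S$, and to all $x,x_1,x_2$ having coordinatewise equal phases), the paper proceeds by an \emph{explicit perturbation}: it first replaces the moduli $r^j_i$ of $x_j$ by nearby $\rho^j_i$ that still sum to $1$, still satisfy $\lambda_1\rho^1_i+\lambda_2\rho^2_i=r^0_i$, but are bounded below by some $\beta>0$ on the support of $x$; then, for $u$ close to $x$, it sets $\mathfrak{v}_j(u)_i=(\rho^j_i+\delta_i(u),\phi_i(u))$, where $\delta_i(u)=|u_i|-r^0_i$ and $\phi_i(u)$ is the phase of $u_i$. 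The required identities follow from $\sum_i\delta_i(u)=0$. In contrast, you recast the problem as a continuous selection from the moving polytope $P(u)=\{p:0\le p_k\le|u^{(k)}|/\lambda_1,\ \sum_kp_k=1\}$ and invoke the Euclidean nearest-point map. Your approach is more conceptual and short-circuits the coordinate bookkeeping (the sets $I_1,I_2$, the constants $\gamma,\beta$, etc.), at the price of appealing to the Hausdorff continuity of $u\mapsto P(u)$; since only the right-hand sides vary here, this is indeed immediate from Hoffman's bound, and the continuity of the resulting projection is then routine. The paper's argument, on the other hand, is entirely self-contained and gives an explicit $\delta$ in terms of the data. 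One small point worth making explicit in your write-up: for $k\notin I$ the phase $\vartheta_k(u)$ is discontinuous, but the product $\vartheta_k(u)y_j^{(k)}(u)$ is continuous because $0\le y_j^{(k)}(u)\le|u^{(k)}|/\min(\lambda_1,\lambda_2)$, so the product vanishes with $u^{(k)}$; this is implicit in your closing sentence but deserves a line.
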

\begin{proof}
We are going to apply Proposition \ref{prop: X has (co) <=> X has (co2)}
teamed with Theorem~\ref{thm:P10-for-S_X}.
Let $x_0,x_1,x_2\in S_{\ell_1^n}$ with $x_1\ne x_2$ and $\lambda_1,\lambda_2>0$
with $\lambda_1+\lambda_2=1$ be such that $x_0=\lambda_1 x_1+\lambda_2 x_2$,
and let $\eps>0$. For a complex number $\zeta$, we write $\zeta=(r,\phi)$,
where $r$ and $\phi$ are, respectively, the modulus and an argument of~$\zeta$.
For every $j \in \{0,1,2\}$, let $x_j=(x^j_i)_{i=1}^n$,
where $x^j_i=(r^j_i,\phi^j_i)$.
We may assume that  $\phi^0_i=\phi^1_i=\phi^2_i$
for every $i\in\{1,\dotsc,n\}$
(this is because $\|x_1+x_2\|=\|x_1\|+\|x_2\|$ yields $|x^1_i+x^2_i|=|x^1_i|+|x^2_i|$).

For every $j \in \{0,1,2\}$ and every $\gamma>0$, define
\begin{multline*}
D_j(\gamma)
:=
\bigl\{
\bigl((r_i,\phi_i)\bigr)_{i=1}^n\in \ell^n_1\colon
\text{$|r_i-r^j_i|<\gamma$ for every $i\in\{1,\dotsc,n\}$}\\
\text{and $|\phi_i-\phi^j_i|<\gamma$ for every $i\in\{1,\dotsc,n\}$
 with $r^j_i\ne0$}
\bigr\},
\end{multline*}
and pick a $\gamma>0$ such that $D_j(2\gamma)\subset B(x_j,\eps)$
whenever $j \in \{1,2\}$.
We may assume that $\gamma<\pi$,
and that $2\gamma<r^j_i$ whenever $j \in \{0,1,2\}$ and $i \in \{1,\dotsc,n\}$
are such that $r^j_i>0$.

Set
\begin{align*}
I_1&:=\bigl\{i\in\{1,\dotsc,n\}\colon r^1_i>r^2_i \bigr\},
&I_2&:=\bigl\{i\in\{1,\dotsc,n\}\colon r^1_i<r^2_i \bigr\}.
\end{align*}
Define $\rho^1_i:=\rho^2_i:=r^1_i=r^2_i$ if $i\notin I_1\cup I_2$, and
\begin{align*}
\rho^1_i&:=r^1_i-\frac{\gamma\lambda_2}{|I_1|}
&&\text{and}
&\rho^2_i&:=r^2_i+\frac{\gamma\lambda_1}{|I_1|}
&&\quad\text{for every $i\in I_1$,}\\
\rho^1_i&:=r^1_i+\frac{\gamma\lambda_2}{|I_2|}
&&\text{and}
&\rho^2_i&:=r^2_i-\frac{\gamma\lambda_1}{|I_2|}
&&\quad\text{for every $i\in I_2$.}
\end{align*}
Observe that
\begin{enumprop}
\item
$\lambda_1\rho^1_i+\lambda_2\rho^2_i=\lambda_1 r^1_i+\lambda_2 r^2_i=r^0_i$
for every $i\in\{1,\dotsc,n\}$;
\item
$\sum_{i=1}^n\rho^j_i=\sum_{i=1}^n r^j_i=1$ for every $j \in \{1,2\}$;
\item
$|\rho^j_i-r^j_i|<\gamma$ for every $j \in \{1,2\}$
and every $i \in \{1,\dotsc,n\}$;
\item
$\rho^j_i\geq\frac{\gamma\min\{\lambda_1,\lambda_2\}}{n}=:\beta>0$
for every $j \in \{1,2\}$ and every $i \in \{1,\dotsc,n\}$ with $r^0_i\ne0$.
\end{enumprop}

Choose a $\delta>0$ such that $B(x_0,\delta)\subset D_0(\beta)$.
For every $u\in S:=B(x_0,\delta)\cap S_{\ell^n_1}$,
writing $u:=\Bigl(\bigl(r^0_i+\delta_i(u),\phi_i(u)\bigr)\Bigr)_{i=1}^n$,
where $|\delta_i(u)|<\beta$ and $|\phi_i(u)-\phi^0_i|<\beta$
(here the latter inequality is dropped if $r^0_i=0$), define
\[
\mathfrak{v}_j(u)
:=\Bigl(\bigl(\rho^j_i+\delta_i(u),\phi_i(u)\bigr)\Bigr)_{i=1}^n,
\quad j \in \{1,2\}.
\]
Note that
$\|u\| = \sum_{i=1}^n r_i^0 + \delta_i(u)
= 1 + \sum_{i=1}^n \delta_i(u)$,
hence $\sum_{i=1}^n \delta_i(u) = 0$
and thus
\begin{equation*}
  \|\mathfrak{v}_j(u)\|
  = \sum_{i=1}^n |\rho_i^j + \delta_i(u)|
  = \sum_{i=1}^n \rho_i^j + \delta_i(u) = 1.
\end{equation*}
The functions $\mathfrak{v}_1,\mathfrak{v}_2\colon S \to B_X$ are continuous
and satisfy \eqref{eq:P10-S_X-conditions} for every $u \in S$.
\end{proof}

From Theorem~\ref{thm:complex-ell_1^n}
(Proposition~\ref{prop:polyhedral-dual-ccom} in the real case),
Proposition~\ref{prop:fin_dim_co_cwo}, and
Theorem~\ref{thm: main results on CWO-s for C_0(K,X)}
we know that, for any scattered compact $K$,
$C_0(K,\ell_1^n) = C_0(K) \iten \ell_1^n$
has property \CWO. A similar result does not hold
for the projective tensor product.

\begin{prop}
  Let $X$ be a Banach space.
  Then $X\pten \ell_1^n$ fails property \CWOBX.
\end{prop}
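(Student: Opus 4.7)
The plan rests on the canonical isometric isomorphism $X\pten\ell_1^n\cong\ell_1^n(X)$, which realises $Y:=X\pten\ell_1^n$ as the $\ell_1$-sum of $n$ copies of $X$, with dual $Y^*\cong\ell_\infty^n(X^*)$. For $n\geq2$, I would then follow the $\ell_1$-sum strategy of \cite[Prop.~2.1]{2017arXiv170302919H}: exhibit a finite convex combination of slices of $B_Y$ and an interior point $z_0\in B_Y^\circ$ lying in this combination but which is \emph{not} a relatively weakly interior point of it.

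Concretely, I would fix $x_0\in S_X$ and $\xs\in S_{X^*}$ with $\re\xs(x_0)$ close to $1$, and for each $i\in\{1,\dotsc,n\}$ define $\phi_i\in S_{Y^*}$ by $\phi_i(y):=\xs(y_i)$. Setting $S_i:=S(\phi_i,\eps)$ for a small $\eps>0$, the $\ell_1$-sum geometry forces every $s\in S_i$ to satisfy $\|s_i\|_X>1-\eps$ and $\sum_{j\neq i}\|s_j\|_X<\eps$. Consequently every point $z$ in the convex combination $C:=\frac{1}{n}(S_1+\dotsb+S_n)$ has $\|z_j\|_X\geq(1-n\eps)/n$ for each coordinate $j$. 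Taking $z_0:=(\alpha/n)(x_0,\dotsc,x_0)$ with $\alpha\in(1-\eps,1)$ yields an element of $B_Y^\circ\cap C$ via the single-slot decomposition $s_i:=(0,\dotsc,0,\alpha x_0,0,\dotsc,0)$.

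The heart of the argument is to show that every relatively weak neighbourhood $W$ of $z_0$ in $B_Y$ contains a point outside $C$. Such a $W$ is prescribed by finitely many functionals $f_1,\dotsc,f_m\in Y^*\cong\ell_\infty^n(X^*)$, which I write as $f_k=(f_{k,1},\dotsc,f_{k,n})$ with each $f_{k,j}\in X^*$. The idea is to construct $w=(w_1,\dotsc,w_n)\in W$ whose first coordinate satisfies $\|w_1\|_X<(1-n\eps)/n$, which by the above estimate would place $w$ outside $C$. One seeks $w_1$ of controlled small $X$-norm together with a redistribution $w_2,\dotsc,w_n$ compensating the values of $f_k$, so that $\sum_j f_{k,j}(w_j-(z_0)_j)$ is small for every $k$ while still $\sum_j\|w_j\|_X\leq1$; the projective tensor product furnishes exactly the required flexibility, in contrast with the injective product $X\iten\ell_1^n$ where the $C_0$-coordinate rigidity of the previous theorems blocks such redistributions.

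The main obstacle is this three-way balancing: the perturbation must simultaneously preserve membership in $B_Y$ (the $\ell_1$-sum constraint), maintain weak closeness to $z_0$ (the finitely many functional constraints), and push $\|w_1\|_X$ strictly below the threshold $(1-n\eps)/n$. Resolving it requires a Hahn--Banach/linear-algebra argument inside the $(m+1)$-codimensional slice of $X^{n-1}$ that annihilates the restrictions of the $f_{k,j}$ for $j\geq2$, together with a careful choice of scaling so that the redistributed mass fits within the unit ball of $\ell_1^n(X)$. This step is where the projective-versus-injective distinction manifests itself, yielding the failure of property \CWOBX in $X\pten\ell_1^n$ announced by the proposition.
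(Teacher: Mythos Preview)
Your reduction to $Y=\ell_1^n(X)$ via $X\pten\ell_1^n\cong\ell_1^n(X)$ matches the paper exactly, and you cite the same source. But from there the paper does nothing more: it simply records that the proof of \cite[Proposition~2.1]{2017arXiv170302919H} already exhibits, for any $Z=A\oplus_p B$ with $1\le p<\infty$, a convex combination of slices of $B_Z$ containing $0$ but no relative weak neighbourhood of $0$. You instead propose a different witness $z_0=(\alpha/n)(x_0,\dotsc,x_0)$ and a different family of slices, and this is where a genuine gap opens.

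Your plan for the hard step is to produce, inside every relative weak neighbourhood $W$ of $z_0$, some $w$ with $\|w_1\|_X<(1-n\eps)/n$. This cannot be done in general. Take $\xs(x_0)=1$ (the natural choice) and let $W$ be cut out, among other conditions, by $|\phi_1(w)-\phi_1(z_0)|<\eta$ with your own functional $\phi_1(y)=\xs(y_1)$. Then every $w\in W$ satisfies
\[
\|w_1\|\ge\re\xs(w_1)=\re\phi_1(w)>\frac{\alpha}{n}-\eta,
\]
and choosing $\eta\le(\alpha-1+n\eps)/n$ (positive since $\alpha>1-\eps>1-n\eps$ for $n\ge2$) forces $\|w_1\|>(1-n\eps)/n$. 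So no point of this $W$ meets your criterion. The underlying reason is that your necessary condition for membership in $C$, namely $\|z_j\|>(1-n\eps)/n$ for every $j$, is already a consequence of the \emph{weakly open} condition $\re\phi_j(z)>(1-n\eps)/n$, which all points of $C$ also satisfy; one cannot escape $C$ by violating a weakly open constraint while remaining weakly close to $z_0$. The ``three-way balancing'' you describe, and the appeal to projective-tensor flexibility, do not address this obstruction. The fix is to use the witness $0$, as in the cited proposition: there the obstruction to lying in the convex combination is a norm bound on an entire coordinate block, and any weak neighbourhood of $0$ in $B_Y$ escapes it via the infinite-dimensionality of the summands.
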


\begin{proof}
  The proof of \cite[Proposition~2.1]{2017arXiv170302919H}
  shows that if $X$ and $Y$ are Banach spaces, and
  $Z := X \oplus_p Y$, where $1 \le p < \infty$,
  then there exists a finite convex combination of slices of $B_Z$
  which contains $0$, but which fails to contain a relatively
  weakly open neighbourhood of $0$.

  The assertion follows since $X\pten \ell_1^n$ is isometrically isomorphic to
  $\ell_1^n(X)$ (see proof of \cite[Example~2.6, p.~19]{MR2003f:46030}).
\end{proof}

\section{Stability results}
\label{sec:2}
In this section we discuss stability results of the \CWO-properties in
Definition \ref{defn:cwos}. We start by showing that they all are
stable by taking one complemented subspaces, but first, let us make things
easier for ourselves.

\begin{lem}\label{lem: reducing to combinations of two}
  Let $X$ be a Banach space.
  \begin{enumerate}
  \item\label{item:a-reduce-lemma}
    The following assertions are equivalent:
    \begin{enumequi}
    \item\label{item:a-1-reduce}
      $X$ has property \CWO;
    \item\label{item:a-2-reduce}
      whenever $U_1$ and $U_2$ are
      relatively weakly open subsets of $B_X$
      and $\lambda_1, \lambda_2 > 0$,
      $\lambda_1 + \lambda_2 = 1$,
      the convex combination $\lambda_1 U_1 + \lambda_2 U_2$
      is open in the relative weak topology of $B_X$.
    \end{enumequi}
  \item\label{item:b-reduce-lemma}
    The following assertions are equivalent:
    \begin{enumequi}
    \item\label{item:b-1-reduce}
      $X$ has property \CWOSX;
    \item\label{item:b-2-reduce}
      whenever $U_1$ and $U_2$ are
      relatively weakly open subsets of $B_X$,
      $\lambda_1, \lambda_2 > 0$,
      $\lambda_1 + \lambda_2 = 1$,
      and $x_1 \in U_1$, $x_2 \in U_2$
      are such that $\|\lambda_1 x_1 + \lambda_2 x_2\| = 1$,
      the element $\lambda_1 x_2 + \lambda_2 x_2$
      is an interior point of
      $\lambda_1 U_1 + \lambda_2 U_2$
      in the relative weak topology of $B_X$.
    \end{enumequi}
  \item\label{item:c-reduce-lemma}
    The following assertions are equivalent:
    \begin{enumequi}
    \item\label{item:c-1-reduce}
      $X$ has property \CWOBX;
    \item\label{item:c-2-reduce}
      whenever $U_1$ and $U_2$ are
      relatively weakly open subsets of $B_X$,
      $\lambda_1, \lambda_2 > 0$,
      $\lambda_1 + \lambda_2 = 1$,
      and $x_1 \in U_1$, $x_2 \in U_2$
      are such that $\|\lambda_1 x_1 + \lambda_2 x_2\| < 1$,
      the element $\lambda_1 x_2 + \lambda_2 x_2$
      is an interior point of
      $\lambda_1 U_1 + \lambda_2 U_2$
      in the relative weak topology of $B_X$.
    \end{enumequi}
  \end{enumerate}
\end{lem}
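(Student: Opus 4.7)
The plan is to prove each of the three equivalences by induction on the number $n$ of relatively weakly open sets in the convex combination. In each part the direction \ref{item:a-1-reduce} $\Rightarrow$ \ref{item:a-2-reduce} (and analogously \ref{item:b-1-reduce} $\Rightarrow$ \ref{item:b-2-reduce} and \ref{item:c-1-reduce} $\Rightarrow$ \ref{item:c-2-reduce}) is immediate by specialising to $n=2$. For the reverse implications, the base case $n=2$ is exactly the two-set hypothesis, so the real content lies in the inductive step from $n$ to $n+1$. Throughout I set $\lambda := 1 - \lambda_1 = \sum_{j=2}^{n+1}\lambda_j$ and $V := \sum_{j=2}^{n+1}\frac{\lambda_j}{\lambda} U_j$, so that any $x = \sum_{j=1}^{n+1}\lambda_j x_j$ with $x_j \in U_j$ decomposes as $x = \lambda_1 x_1 + \lambda y$, where $y := \sum_{j=2}^{n+1}\frac{\lambda_j}{\lambda} x_j \in V$.

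For part \ref{item:a-reduce-lemma} the inductive hypothesis makes $V$ relatively weakly open in $B_X$, whereupon the two-set hypothesis \ref{item:a-2-reduce} applied to $U_1$ and $V$ yields that $\lambda_1 U_1 + \lambda V = \sum_{j=1}^{n+1}\lambda_j U_j$ is relatively weakly open. For part \ref{item:b-reduce-lemma} I would start from $x \in C \cap S_X$ and observe that $\|x\| = 1$ together with $\sum_j \lambda_j = 1$ and $\|x_j\| \le 1$ forces $\|x_j\| = 1$ for every $j$, and hence also $\|y\| = 1$. The inductive hypothesis of (b) then makes $y$ an interior point of $V$, producing a relatively weakly open $W \subset B_X$ with $y \in W \subset V$; the two-set hypothesis \ref{item:b-2-reduce} applied to $x = \lambda_1 x_1 + \lambda y$ shows that $x$ is an interior point of $\lambda_1 U_1 + \lambda W \subset \sum_{j=1}^{n+1}\lambda_j U_j$.

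Part \ref{item:c-reduce-lemma} is the main obstacle, because when $\|x\| < 1$ there is no reason for $\|y\|$ to be strictly less than $1$, and without this one cannot apply the inductive hypothesis of (c) at $y$. My remedy is a convex perturbation of the representation of $x$: for a small $\mu \in (0,1)$ replace each $x_j$ by $x_j' := (1-\mu) x_j + \mu x$. Then $\sum_j \lambda_j x_j' = x$, and $\|x_j'\| \le (1-\mu) + \mu \|x\| < 1$ for every $j$ since $\|x\| < 1$. Because the relative weak topology on $B_X$ is coarser than the norm topology, each $U_j$ is in particular norm open, so $x_j' \in U_j$ whenever $\mu$ is small enough. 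Setting $y' := \sum_{j=2}^{n+1}\frac{\lambda_j}{\lambda} x_j' \in V$ one now has $\|y'\| < 1$, so the inductive hypothesis of (c) at $y'$ supplies a relatively weakly open $W \subset B_X$ with $y' \in W \subset V$, and a final application of \ref{item:c-2-reduce} to the decomposition $x = \lambda_1 x_1' + \lambda y'$ (which still satisfies $\|x\| < 1$) completes the inductive step.
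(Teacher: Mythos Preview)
Your proof is correct and follows essentially the same approach as the paper: the trivial direction, followed by induction via the splitting $x=\lambda_1 x_1+\lambda y$, with the convex perturbation $x_j':=(1-\mu)x_j+\mu x$ used in part~\ref{item:c-reduce-lemma} to force $\|x_j'\|<1$. The paper isolates this last perturbation step as a separate preparatory lemma (Lemma~\ref{lem: norm-less-than-one conv. comb.}, with $r$ playing the role of your $\mu$), while you inline it directly into the inductive step; otherwise the arguments are identical.
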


The proof of \ref{item:c-reduce-lemma},
\ref{item:c-2-reduce} $\Rightarrow$~\ref{item:c-1-reduce},
makes use of the following lemma.

\begin{lem}\label{lem: norm-less-than-one conv. comb.}
  Let $X$ be a Banach space, let $n \in \N$, let $U_1,\dotsc, U_n$ be
  relatively weakly open subsets of $B_X$,
  and let $\lambda_1,\dotsc, \lambda_n > 0$,
  $\sum_{j=1}^n \lambda_j = 1$.
  Then every $x \in \sum_{j=1}^n\lambda_j U_j$ with $\|x\| < 1$ can
  be written as
  \begin{equation*}
    x = \sum_{j=1}^n \lambda_j x_j,
    \quad
    \text{where}\ x_j \in U_j
    \ \text{and}\ \|x_j\| < 1
    \ \text{for every}\ j \in \{1,\dotsc,n\}.
     \end{equation*}
\end{lem}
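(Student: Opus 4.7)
The plan is to start from any representation $x = \sum_{j=1}^n \lambda_j y_j$ with $y_j \in U_j$, and then perturb each $y_j$ slightly toward $x$ itself. Because the perturbation moves all summands along the same direction (the vector $x - y_j$ suitably scaled so that the $\lambda_j$-weighted sum cancels), the convex combination stays fixed at $x$, while the norms of the new summands can be made strictly less than $1$ thanks to the hypothesis $\|x\| < 1$.

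Concretely, for $t \in [0,1]$ and $j \in \{1,\dotsc,n\}$, I would set
\[
x_j(t) := (1-t)y_j + t x.
\]
Then $\sum_{j=1}^n \lambda_j x_j(t) = (1-t)x + tx = x$ for every $t$, and the convexity bound
\[
\|x_j(t)\| \leq (1-t)\|y_j\| + t\|x\| \leq 1 - t\bigl(1 - \|x\|\bigr)
\]
shows that $\|x_j(t)\| < 1$ for every $t > 0$.

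The only remaining point is to ensure $x_j(t) \in U_j$ for small $t > 0$. Since the weak topology is coarser than the norm topology, every relatively weakly open subset of $B_X$ is in particular relatively norm open. As $y_j \in U_j$ and the path $t \mapsto x_j(t)$ is norm-continuous with $x_j(0) = y_j$, there exists $\delta_j > 0$ such that $x_j(t) \in U_j$ for every $t \in [0,\delta_j)$. Picking any $t \in \bigl(0, \min_j \delta_j\bigr)$ and setting $x_j := x_j(t)$ yields the desired representation. There is no substantive obstacle; the only observation worth flagging is that relative weak openness can be exploited via norm continuity of the perturbation, so no weak-topological machinery is actually needed.
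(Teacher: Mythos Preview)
Your proof is correct and is essentially identical to the paper's: both take a representation $x=\sum_j\lambda_j y_j$ and replace each $y_j$ by $x_j:=(1-t)y_j+tx$ for a sufficiently small $t\in(0,1)$, noting that the convex combination is preserved, that $\|x_j\|<1$, and that $x_j\in U_j$ by (relative) norm-openness. The paper merely writes ``choosing $r\in(0,1)$ small enough'' where you spell out the continuity argument.
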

\begin{proof}
  Let $x := \sum_{j=1}^n \lambda_j u_j$
  with $u_j \in U_j$, $j \in \{1,\dotsc,n\}$, be such that $\|x\| < 1$.
  Choosing $r \in (0,1)$ small enough,
  we have $x_j := r x + (1-r) u_j \in U_j$ and
  $\|x_j\| < 1$ for every $j \in\{1,\dotsc,n\}$.
  It remains to observe that
  \begin{equation*}
    \sum_{j=1}^n \lambda_j x_j
    = r x + (1-r) \sum_{j=1}^n \lambda_j u_j
    = r x + (1-r) x
    = x. \qedhere
  \end{equation*}
\end{proof}

\begin{proof}[Proof of Lemma \ref{lem: reducing to combinations of two}]
  In each of \ref{item:a-reduce-lemma}--\ref{item:c-reduce-lemma},
  the implication \ref{item:a-1-reduce} $\Rightarrow$~\ref{item:a-2-reduce}
  is trivial. \ref{item:a-2-reduce} $\Rightarrow$~\ref{item:a-1-reduce}
  follows easily by induction using the same idea of
  splitting the convex combination as in
  Proposition~\ref{prop: X has (co) <=> X has (co2)}.
  More precisely, for \ref{item:c-reduce-lemma},
  \ref{item:c-2-reduce} $\Rightarrow$~\ref{item:c-1-reduce},
  one first uses Lemma~\ref{lem: norm-less-than-one conv. comb.}.
\end{proof}

\begin{prop}\label{prop:norm1compl-inherit-cwox}
  Let $X$ be a Banach space. Norm-one complemented
  subspaces of $X$ inherit each of the properties
  \CWO, \CWOBX, and \CWOSX.
\end{prop}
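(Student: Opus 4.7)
The plan is to exploit a norm-one projection $P\colon X\to Y$ to lift the weakly open sets from $B_Y$ up to $B_X$, apply the corresponding \CWO-property in $X$, and then push the resulting neighbourhood back down via intersection with $Y$.

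More precisely, given a finite convex combination $C=\sum_{j=1}^n\lambda_j W_j$ of relatively weakly open subsets $W_j$ of $B_Y$, I would set
\begin{equation*}
\widetilde{W}_j:=P^{-1}(W_j)\cap B_X,\qquad j\in\{1,\dotsc,n\}.
\end{equation*}
Since $P$ is weak-to-weak continuous and $\|P\|=1$ guarantees $P(B_X)\subset B_Y$, each $\widetilde{W}_j$ is relatively weakly open in $B_X$. Set $\widetilde C:=\sum_{j=1}^n\lambda_j\widetilde W_j$. For any $y\in C$, writing $y=\sum\lambda_j y_j$ with $y_j\in W_j\subset Y$, the fact that $P$ is a projection onto $Y$ gives $P(y_j)=y_j\in W_j$, so $y_j\in\widetilde W_j$ and hence $y\in\widetilde C$.

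Next, I would invoke the assumed property of $X$, using that the norm on $Y$ agrees with that of $X$: if $X$ has \CWO\ then $y$ is automatically an interior point of $\widetilde C$; if $y\in S_Y$, then also $y\in S_X$, so \CWOSX\ of $X$ applies; and if $y\in B_Y^\circ$, then $y\in B_X^\circ$, so \CWOBX\ of $X$ applies. In each case one obtains a relatively weakly open neighbourhood $U$ of $y$ in $B_X$ with $U\subset\widetilde C$. Intersecting with $B_Y$ yields a set $U\cap B_Y$ which is relatively weakly open in $B_Y$, because by Hahn--Banach the weak topology of $Y$ coincides with the subspace topology inherited from the weak topology of $X$.

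It remains to verify $U\cap B_Y\subset C$, which is the crux. If $u\in U\cap B_Y$, then $u\in\widetilde C$, so $u=\sum\lambda_j x_j$ with $x_j\in\widetilde W_j$; applying $P$ gives $u=P(u)=\sum\lambda_j P(x_j)$, and by definition of $\widetilde W_j$ each $P(x_j)\in W_j$, hence $u\in C$. Thus $y$ is an interior point of $C$ in the relative weak topology of $B_Y$, as required. By Lemma~\ref{lem: reducing to combinations of two} it suffices to handle the case $n=2$, but the argument above runs uniformly for any $n$, so this reduction is not strictly needed. The only mildly delicate point is the weak-topology identification between $Y$ and its image inside $X$; everything else is a bookkeeping exercise with the projection.
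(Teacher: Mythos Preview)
Your proof is correct and follows essentially the same approach as the paper: lift the relatively weakly open sets from $B_Y$ to $B_X$ via $P^{-1}(\cdot)\cap B_X$, apply the relevant \CWO-property in $X$, and intersect back with $B_Y$. The paper's write-up is more compressed (it records the key identity $C_Y=C_X\cap B_Y$ and the corresponding identities on $S_Y$ and $B_Y^\circ$ without spelling out the verification), but the argument is the same as yours.
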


\begin{proof}
  Let $Y$ be a subspace of $X$ and $P\colon X\to X$
  a projection onto $Y$ with $\|P\|=1$.
  Using Lemma~\ref{lem: reducing to combinations of two},
  it is enough to consider
  \[C_Y := \lambda_1U_1+\lambda_2U_2,\]
  where $U_1$ and $U_2$ are relatively weakly open subsets of $B_Y$
  and $\lambda_1,\lambda_2>0$ with $\lambda_1+\lambda_2=1$.
  Since $P$ is weak-to-weak continuous,
  $P^{-1}(U_1)\cap B_X$ and $P^{-1}(U_2)\cap B_X$
  are relatively weakly open in $B_X$. Set
  \[
    C_X := \lambda_1 \big(P^{-1}(U_1)\cap B_X\big)
    +\lambda_2 \big(P^{-1}(U_2)\cap B_X\big).
  \]
  Notice that $C_Y=C_X\cap B_Y$.
  This is immediate from $C_Y\subset C_X$ and $P(C_X)\subset C_Y$.

  If $X$ has property \CWO,
  then $C_X$ is relatively weakly open in $B_X$,
  therefore $C_Y$ is relatively weakly open in $B_Y$
  and it follows that $Y$ has property \CWO.

  For properties \CWOSX and \CWOBX,
  notice that $C_Y\cap S_Y=(C_X\cap S_X)\cap B_Y$
  and $C_Y\cap B_Y^\circ=(C_X\cap B_X^\circ)\cap B_Y$.
\end{proof}

In the case of \CWOBX, we can say a lot more.
Let $\lambda \ge 1$.
Recall that a closed subspace $Y$ of a Banach space $X$
is said to be \emph{locally $\lambda$-complemented} in $X$
if, for every finite dimensional subspace $E$ of $X$
and every $\varepsilon > 0$, there exists a linear
operator $P_E: E \to Y$ with $P_E x = x$ for all
$x \in E \cap Y$ and $\|P_E\| \le \lambda + \varepsilon$.
If $Y$ is locally $\lambda$-complemented in $X$, then there
exists an \emph{extension operator}
$\Phi: Y^* \to X^*$, that is $(\Phi y^*)(y) = y^*(y)$
for all $y \in Y$ and $y^* \in Y^*$.
Note that $\Phi$ can be chosen so that $\|\Phi\| \le \lambda$.
This was shown independently
by Fakhoury \cite[Th{\'e}or{\`e}me~2.14]{MR0348457}
and Kalton \cite[Theorem~3.5]{MR755269}.

\begin{thm}\label{thm: SD2P, CWO-B}
  Let $X$ be a Banach space with property \CWOBX
  and let $Z$ be an infinite dimensional Banach space such that,
  for every $\varepsilon > 0$, the space~$X$ contains
  a locally $(1+\varepsilon)$-complemented subspace
  which is $(1+\varepsilon)$-isometric to~$Z$.
  Then $Z$ has the strong diameter two property.
\end{thm}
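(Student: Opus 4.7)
The strategy is to first reduce to the strong diameter two property (SD2P) and then transfer SD2P from $X$ to $Z$ through the intermediate subspace $Y$. The reduction uses the remark in the introduction that property \CWOBX implies SD2P for $X$. Since the hypothesis on $Y$ holds for every $\varepsilon>0$, to conclude SD2P for $Z$ it suffices to show, for each finite convex combination of slices $C=\sum_{j=1}^n\lambda_j S(z_j^\ast,\alpha_j)$ of $B_Z$ and each $\eta>0$, the existence of $c_1,c_2\in C$ with $\|c_1-c_2\|>2-\eta$.

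Fix $\eta>0$ and choose $\varepsilon>0$ very small. Pick $Y\subset X$ locally $(1+\varepsilon)$-complemented together with an isomorphism $T\colon Z\to Y$ satisfying $\|z\|\le\|Tz\|\le(1+\varepsilon)\|z\|$, and let $\Phi\colon Y^\ast\to X^\ast$ be the extension operator with $\|\Phi\|\le1+\varepsilon$. Transfer the slice data by setting $y_j^\ast:=z_j^\ast\circ T^{-1}\in Y^\ast$ (whose norm lies within $\varepsilon$ of $1$) and $x_j^\ast:=\Phi(y_j^\ast)\in X^\ast$ (with $x_j^\ast|_Y=y_j^\ast$ and $\|x_j^\ast\|\le1+\varepsilon$). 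A direct verification shows that any $v\in B_Y$ with $\re y_j^\ast(v)>1-\alpha_j$ gives $T^{-1}v\in S(z_j^\ast,\alpha_j)$, and that $T^{-1}$ expands distances by at most $1+\varepsilon$; so it suffices to produce $v_j^{(1)},v_j^{(2)}\in B_Y$ with $\re y_j^\ast(v_j^{(k)})>1-\alpha_j$ and $\bigl\|\sum_j\lambda_j(v_j^{(1)}-v_j^{(2)})\bigr\|>2-\eta/(2(1+\varepsilon))$. Applying SD2P of $X$ to the rescaled slices $\tilde S_j:=\{u\in B_X:\re x_j^\ast(u)>1-\alpha_j'\}$ (with $\alpha_j'<\alpha_j$ chosen to absorb later losses) yields witnesses $u_j^{(k)}\in\tilde S_j$ with $\bigl\|\sum_j\lambda_j(u_j^{(1)}-u_j^{(2)})\bigr\|>2-\eta'$ for an arbitrarily small $\eta'>0$.

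The crux is to pull back the $u_j^{(k)}\in B_X$ to $v_j^{(k)}\in B_Y$ while preserving the slice inequalities and the norm of the convex difference. My plan is to apply the Lindenstrauss--Rosenthal principle of local reflexivity to $\Phi^\ast(u_j^{(k)})\in(1+\varepsilon)B_{Y^{\ast\ast}}$, where $\Phi^\ast\colon X^{\ast\ast}\to Y^{\ast\ast}$ is the $(1+\varepsilon)$-projection adjoint to $\Phi$: with a finite norming set $F\subset Y^\ast$ containing each $y_j^\ast$ together with a functional $h\in S_{Y^\ast}$ whose $\Phi$-extension $\Phi(h)$ approximately norms $\sum_j\lambda_j(u_j^{(1)}-u_j^{(2)})$, PLR produces a $(1+\delta)$-bi-Lipschitz linear map $\tau\colon\spann\{\Phi^\ast(u_j^{(k)})\}\to Y$ satisfying $f(\tau e)=e(f)$ for all $f\in F$ and all $e$ in its domain. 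The candidates $v_j^{(k)}:=\tau(\Phi^\ast(u_j^{(k)}))/((1+\delta)(1+\varepsilon))$ lie in $B_Y$, satisfy $\re y_j^\ast(v_j^{(k)})\approx\re x_j^\ast(u_j^{(k)})/((1+\delta)(1+\varepsilon))>1-\alpha_j$, and admit $\bigl\|\sum_j\lambda_j(v_j^{(1)}-v_j^{(2)})\bigr\|\ge h\bigl(\sum_j\lambda_j(v_j^{(1)}-v_j^{(2)})\bigr)=(\Phi h)\bigl(\sum_j\lambda_j(u_j^{(1)}-u_j^{(2)})\bigr)/((1+\delta)(1+\varepsilon))$, which is close to $2$ by the choice of $h$. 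The main technical challenge is ensuring that the norming functional for $\sum_j\lambda_j(u_j^{(1)}-u_j^{(2)})$ can be taken (approximately) inside $\Phi(B_{Y^\ast})$, as the PLR step requires a preselected norming set drawn from $Y^\ast$; I expect this to be overcome by applying SD2P of $X$ to a refined convex combination that incorporates the direction $\Phi(h)/\|\Phi(h)\|$ for a suitably chosen $h\in S_{Y^\ast}$, exploiting the freedom in choosing SD2P witnesses. Once this is in place, letting $\eta'$, $\delta$, $\varepsilon\to0$ yields $\diam(C)=2$ and hence SD2P of $Z$.
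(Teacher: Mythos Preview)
Your reduction to SD2P of $X$ discards exactly the information needed to finish. The paper does \emph{not} pass through SD2P of $X$; it uses property \CWOBX directly. After transferring the slice data to $X$ via $\Phi T^\ast$, the convex combination $\tfrac12(C_1^X+C_2^X)$ contains $0$, so \CWOBX yields a relatively weakly open $W\subset B_X$ with $0\in W\subset\tfrac12(C_1^X+C_2^X)$. Then $W\cap B_Y$ is a non-empty relatively weakly open subset of $B_Y$, and since $Y$ is infinite dimensional it meets $S_Y$. This produces a point $y\in S_Y$---already in $Y$, already of norm one---that lies in the convex combination. Writing $y=\sum_{j,i}\tfrac{\lambda_i}{2}x_{j,i}$ with $x_{j,i}\in S_{j,i}^X$ and applying a PLR-type result (the paper uses \cite[Corollary~3.3]{OP3}) gives a map $P_E$ on $\spann\{x_{j,i}\}$ into $Y$ that \emph{fixes} $y$ and preserves the values of the functionals $T^\ast z_{j,i}^\ast$; the normalized images $P_Ex_{j,i}/(1+\varepsilon)$ then sit in the correct slices of $B_Y$, transfer to slices of $B_Z$ via $T$, and their convex combination still has norm essentially $1$, yielding the contradiction.

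Your route, by contrast, hands you SD2P witnesses $u_j^{(k)}\in B_X$ whose convex difference has large norm in $X$, but the norming functional is an arbitrary element of $S_{X^\ast}$, and there is no mechanism forcing it near $\Phi(B_{Y^\ast})$---that set is typically nowhere close to norm-dense in $B_{X^\ast}$. The fix you sketch (``incorporate $\Phi(h)/\|\Phi(h)\|$ for a suitably chosen $h$'') is circular: the right $h$ depends on where the SD2P witnesses land, while those witnesses depend on the slices you feed in. SD2P is a black-box diameter statement and gives no control over \emph{which} functional realizes the diameter; \CWOBX is genuinely stronger here because a weakly open set restricts well to the subspace $Y$, allowing you to locate the witness inside $Y$ from the outset and sidestep the norming problem entirely.
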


\begin{proof}
  Suppose for contradiction that $Z$ fails the strong diameter two property.
  Then there are $n\in\N$ $\zs_1,\dotsc,\zs_n\in S_{Z^\ast}$, $\alpha>0$,
  $\lambda_1,\dotsc,\lambda_n>0$ with $\sum_{i=1}^n\lambda_i=1$, and $\rho\in(0,1)$
  such that, whenever $j\in\{1,2\}$, setting
  $\zs_{j,i}:=(-1)^{j-1}\zs_i$ and
  \begin{equation*}
    S^Z_{j,i} := S(\zs_{j,i},\alpha)
    = \{ z \in B_Z \colon \re \zs_{j,i}(z) > 1 - \alpha\},
  \end{equation*}
  one has $\diam(C^Z_j)< 2\rho$, where $C^Z_j:=\sum_{i=1}^n\lambda_i S^Z_{j,i}$.
  Observe that $\frac{1}{2}(C^Z_1 + C^Z_2) \subset B(0,\rho)$,
  because $S^Z_{2,i}=-S^Z_{1,i}$ and thus $C^Z_2=-C^Z_1$.

  Choose $\eps>0$ so that $1/{(1 + \eps)^2} > \max\{\rho,1-\alpha\}$,
  and let $Y$ be a closed locally $(1+\frac{\varepsilon}{2})$-complemented
  subspace of $X$ such that there exists
  an isomorphism $T \in \mathcal{L}(Y,Z)$ with $\|T\|\leq 1$
  and $\|T^{-1}\| < 1+\eps$.
  Since $Y$ is locally $(1+\frac{\varepsilon}{2})$-complemented, there exists
  an extension operator
  $\Phi : Y^* \to X^*$ with $\|\Phi\| \le 1 + \frac{\varepsilon}{2}$.
  Consider the slices
  \begin{equation*}
    S^X_{j,i} := \biggl\{ x \in B_X
    \colon \re(\Phi T^\ast\zs_{j,i})(x)
    >\frac{1}{1+\eps} \biggr\}
  \end{equation*}
  of $B_X$. To see that the sets $S^X_{j,i}$ are non-empty, observe that
  \begin{equation*}
    \|\Phi T^\ast\zs_{j,i}\|
    \ge \|\Phi T^\ast\zs_{j,i}|_Y\|
    =\|T^\ast\zs_{j,i}\|
    \geq\frac{\|\zs_{j,i}\|}{\|(T^\ast)^{-1}\|}
    = \frac{1}{\|(T^{-1})^\ast\|}
    > \frac{1}{1+\eps}.
  \end{equation*}
  Set $C^X_j:=\sum_{i=1}^n\lambda_i S^X_{j,i}$, $j\in\{1,2\}$.

  Since
  $0 \in \sum_{j=1}^2\sum_{i=1}^n\frac{\lambda_i}{2}S^X_{j,i}
  = \frac{1}{2}(C^X_1 + C^X_2)$,
  by courtesy of property \CWOBX{} for $X$,
  there is a relatively weakly open subset $W$ of $B_X$ such that
  $0 \in W \subset \frac{1}{2}(C^X_1 + C^X_2)$.
  Since $\widehat{W} := B_Y\cap W$ is non-empty
  (because $0 \in \widehat{W}$)
  and relatively weakly open in $B_Y$,
  there exists a $y \in S_Y \cap \widehat{W}$
  (here we use that $Y$ is infinite dimensional).
  Now $y\in\sum_{j=1}^2\sum_{i=1}^n\frac{\lambda_i}{2}S^X_{j,i}$,
  thus there are $x_{j,i}\in S^X_{j,i}$ such that
  $y=\sum_{j=1}^2\sum_{i=1}^n\frac{\lambda_i}{2}x_{j,i}$.
  Define
  \begin{equation*}
    E := \mbox{span}\{ x_{j,i} : j \in \{1,2\}, i \in
    \{1,\dotsc,n\} \} \subset X
  \end{equation*}
  and
  \begin{equation*}
    F := \mbox{span}\{ T^* z^*_{j,i} : j \in \{1,2\}, i \in
    \{1,\dotsc,n\} \} \subset Y^*.
  \end{equation*}
  Let $I_E : E \to X$ be the natural embedding.
  By \cite[Corollary~3.3]{OP3}, there exists
  a linear operator
  $P_E : E \to Y$ with $\|P_E\| \le 1 + \varepsilon$,
  $P_E x = x$ for all $x \in E \cap Y$, and
  $y^*(P_E x) = \Phi y^*(x)$ for all $x \in E$ and $y^* \in F$.
  Then $y = P_E y
  = \sum_{j=1}^2\sum_{i=1}^n\frac{\lambda_i}{2} P_E x_{j,i}$.
  For every $j\in\{1,2\}$ and every $i\in\{1,\dotsc,n\}$, one has
  \begin{equation*}
    y_{j,i} := \frac{1}{1+\eps} P_E x_{j,i}
    \in \biggl\{ y \in B_Y \colon
    \re(T^\ast\zs_{j,i})(y) > \frac{1}{(1 + \eps)^2}\biggr\}.
  \end{equation*}
  Set $y_j := \sum_{i=1}^n\lambda_i y_{j,i}$, $j\in\{1,2\}$.
  Observing that $Ty_{j,i}\in S^Z_{j,i}$,
  one has $Ty_j\in C^Z_j$, thus $v:=\frac{1}{2}(Ty_1+Ty_2)\in B(0,\rho)$ and
  $\frac{1}{2}(y_1+y_2)=T^{-1}v\in B\bigl(0,(1+\eps)\rho\bigr)$.
  It follows that
  \begin{equation*}
    1 = \|y\|
    = \biggl\|\sum_{j=1}^2\sum_{i=1}^n
    \frac{\lambda_i}{2} P_E x_{j,i} \biggr\|
    = (1 + \eps) \biggl\|\frac{1}{2}(y_1+y_2)\biggr\|
    \le \rho (1 + \eps)^2 < 1,
  \end{equation*}
  a contradiction.
\end{proof}

Using the fact that if a Banach space $X$ contains a complemented copy
of $\ell_1$, then it already contains, for any $\eps >0$, a
$(1+\eps)$-complemented subspace $(1 + \eps)$-isomorphic to $\ell_1$
\cite[Theorem~5]{MR1698052}, we immediately get

\begin{cor}\label{cor:compl-ell1-CWO-BX}
  If $X$ contains a complemented subspace isomorphic to $\ell_1$
  then $X$ does not have property \CWOBX.
\end{cor}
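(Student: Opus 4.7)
The plan is to combine Theorem~\ref{thm: SD2P, CWO-B} with the cited \cite[Theorem~5]{MR1698052} to reduce the statement to the (well-known) failure of the strong diameter two property for $\ell_1$.

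Suppose, for contradiction, that $X$ has property \CWOBX and contains a complemented subspace isomorphic to $\ell_1$. By \cite[Theorem~5]{MR1698052}, for every $\varepsilon > 0$ the space $X$ contains a $(1+\varepsilon)$-complemented subspace $(1+\varepsilon)$-isomorphic to $\ell_1$. A $(1+\varepsilon)$-complemented subspace is automatically locally $(1+\varepsilon)$-complemented: simply restrict the global norm-$(1+\varepsilon)$ projection to any finite dimensional subspace of $X$. Thus the hypotheses of Theorem~\ref{thm: SD2P, CWO-B} are satisfied with $Z := \ell_1$, and that theorem yields that $\ell_1$ has the strong diameter two property.

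Next I would observe that $\ell_1$ does not even have the (weak) diameter two property. For any $\varepsilon \in (0,1)$, every $x = (x_i)_{i=1}^\infty$ in the slice $S(e_1^\ast,\varepsilon) \subset B_{\ell_1}$ satisfies $\re x_1 > 1 - \varepsilon$, which forces $|x_1| > 1 - \varepsilon$ and hence $\sum_{i \geq 2}|x_i| \leq 1 - |x_1| < \varepsilon$. A direct estimate then shows that $\diam S(e_1^\ast, \varepsilon) \to 0$ as $\varepsilon \to 0^+$, contradicting the strong diameter two property (which requires in particular every slice to have diameter~$2$). This contradiction proves the corollary.

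There is really no conceptual obstacle here: once Theorem~\ref{thm: SD2P, CWO-B} and the upgrade to near-isometric complementation from \cite{MR1698052} are available, the deduction is bookkeeping. The only point worth flagging is the straightforward observation that a globally $(1+\varepsilon)$-complemented subspace is in particular locally $(1+\varepsilon)$-complemented, which is immediate from the definition of local complementation.
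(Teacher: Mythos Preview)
Your proof is correct and follows exactly the approach of the paper: invoke \cite[Theorem~5]{MR1698052} to upgrade the complemented copy of $\ell_1$ to $(1+\varepsilon)$-complemented, $(1+\varepsilon)$-isomorphic copies, observe this gives local $(1+\varepsilon)$-complementation, apply Theorem~\ref{thm: SD2P, CWO-B}, and contradict the fact that $\ell_1$ fails the strong diameter two property. The paper compresses all of this into the phrase ``we immediately get''; you have simply spelled out the details, including the elementary verification that $\ell_1$ has slices of arbitrarily small diameter.
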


Since every nonreflexive subspace of an L-embedded space contains
a complemented subspace isomorphic to $\ell_1$
\cite[IV.Corollary~2.3]{MR1238713} we obtain the following.

\begin{cor}\label{cor:L1_fails_CWOBXo}
  Let $X$ be an L-embedded Banach space and $M$ a closed
  infinite dimensional subspace of $X$.
  Then $M$ does not have property \CWOBX.

  In particular,
  $L_1[0,1]$ does not have property \CWOBX.
\end{cor}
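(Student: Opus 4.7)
The strategy is immediate from the two ingredients already assembled just before the statement: the cited external theorem \cite[IV.Corollary~2.3]{MR1238713} on non-reflexive subspaces of L-embedded spaces, and Corollary~\ref{cor:compl-ell1-CWO-BX} established just above. The plan is to feed the output of the first into the hypothesis of the second.

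First I would invoke \cite[IV.Corollary~2.3]{MR1238713}: since $M$ is a non-reflexive closed subspace of the L-embedded space $X$, it contains a closed subspace $N \subset M$ which is isomorphic to $\ell_1$ and complemented in $X$ by some bounded projection $P \colon X \to X$ with range $N$. The next (and only routine) step is to observe that complementation of $N$ in $X$ transfers to complementation of $N$ in $M$: the restriction $P|_M \colon M \to M$ is again a bounded projection with range $N$, simply because $N \subset M$ and $P$ fixes $N$. Thus $M$ itself contains a complemented subspace isomorphic to $\ell_1$, and Corollary~\ref{cor:compl-ell1-CWO-BX} applies verbatim to give that $M$ fails property \CWOBX.

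For the \emph{in particular} part, I would just recall the classical fact that $L_1[0,1]$ is L-embedded in its bidual, via the Yosida--Hewitt decomposition
\begin{equation*}
  L_1[0,1]^{\ast\ast} = L_1[0,1] \oplus_1 L_1[0,1]^s,
\end{equation*}
where $L_1[0,1]^s$ denotes the purely finitely additive part. Since $L_1[0,1]$ is itself a closed infinite dimensional (and non-reflexive) subspace of itself, applying the general statement with $X = M = L_1[0,1]$ yields the final assertion.

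There is really no obstacle here---all the nontrivial content has been packaged into Corollary~\ref{cor:compl-ell1-CWO-BX} (which uses the strong diameter two property machinery of Theorem~\ref{thm: SD2P, CWO-B}) and into the cited L-embedded space theorem. The argument reduces to a short implication chain, with the only mildly nontrivial point being the elementary restriction of the projection $P$ from $X$ to $M$.
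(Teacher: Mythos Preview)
Your approach is identical to the paper's: the corollary is stated there as an immediate consequence of \cite[IV.Corollary~2.3]{MR1238713} combined with Corollary~\ref{cor:compl-ell1-CWO-BX}, exactly as you write it, including the elementary observation that the projection from $X$ onto the copy of $\ell_1$ restricts to a projection on $M$.

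One small point worth flagging (present in both your argument and the paper's one-line justification): you assert that $M$ is non-reflexive, but the hypothesis only gives that $M$ is closed and infinite-dimensional, and an L-embedded space may well contain infinite-dimensional reflexive subspaces (e.g.\ $\ell_2 \hookrightarrow L_1[0,1]$ via the Rademacher or Gaussian sequences). The reflexive case has to be handled separately: as remarked in Section~\ref{sec:intro}, property \CWOBX implies the strong diameter two property by \cite[Theorem~2.4]{2017arXiv170304749L}, while a reflexive space has the Radon--Nikod\'ym property and hence slices of $B_M$ of arbitrarily small diameter, so it fails the strong diameter two property and therefore \CWOBX. With this easy addition the argument is complete.
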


\section{The spaces $c_0(X_n)$ and $L_1(\mu)$}
\label{sec:5}

Let $\{X_n\}$ be a sequence of Banach spaces.
Then $c_0(X_n)$ is the Banach space of all norm null
sequences $(x_n)$, where $x_n \in X_n$ for all $n \in \N$,
with norm $\Vert (x_n)\Vert = \sup \{\Vert x_n\Vert :n\in \N \}$.
Note that the dual Banach
space of $c_0(X_n)$ is the Banach space $\ell_1(X_n^*)$
with norm
$\Vert (x_n^*)\Vert =\sum_{n=1}^{\infty}\Vert x_n^*\Vert$.

We will need a lemma similar to
Lemma~\ref{lem: C_0(K,X)* when K scattered -- betterment}.

\begin{lem}\label{lem:rwo-lemma-for-N-and-Xn}
  Let $\{X_n\}$ be a sequence of finite dimensional Banach spaces,
  and let $x \in B_{c_0(X_n)}$.
  \begin{enumerate}
  \item\label{item:1-c0X_n-lemma}
    Let $U$ be a neighbourhood of $x$
    in the relative weak topology of $B_{c_0(X_n)}$.
    Then there are a finite subset $M$ of $\N$
    and an $\varepsilon > 0$ such that, whenever $y \in B_{c_0(X_n)}$ satisfies
    \begin{equation}\label{eq: ||y(n)-x(n)||<eps for every n in M}
      \|y(m) - x(m)\| < \varepsilon
      \quad \text{for every}\ n \in M,
    \end{equation}
    one has $y \in U$.
  \item\label{item:2-c0X_n-lemma}
    Let $M$ be a finite subset of
    $\N$, and let $\eps > 0$.
    Then there is a neighbourhood $U$ of $x$ in the
    relative weak topology of $B_{c_0(X_n)}$ such that every $y \in U$
    satisfies \eqref{eq: ||y(n)-x(n)||<eps for every n in M}.
  \end{enumerate}
\end{lem}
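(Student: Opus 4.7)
The plan is to follow essentially verbatim the scheme of Lemma~\ref{lem: C_0(K,X)* when K scattered -- betterment}, using the identification $c_0(X_n)^\ast=\ell_1(X_n^\ast)$: a functional $f\in c_0(X_n)^\ast$ is a sequence $(f(n))_n$ with $f(n)\in X_n^\ast$, $\|f\|=\sum_n\|f(n)\|$, acting by $f(y)=\sum_n f(n)(y(n))$. Any such $f$ is norm-approximated by its finite truncations, which plays the role of the approximation of elements of $B_{C_0(K,X)^\ast}$ by discrete measures in the $C_0(K,X)$ setting (where scatteredness of $K$ was used).

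For part \ref{item:1-c0X_n-lemma} I would first fix a finite subset $\mathcal{F}\subset S_{c_0(X_n)^\ast}$ and an $\eps>0$ with
\[
\{y\in B_{c_0(X_n)}\colon |f(y)-f(x)|<3\eps \text{ for every } f\in\mathcal{F}\}\subset U.
\]
For each $f\in\mathcal{F}$, since $\sum_n\|f(n)\|\leq1$, I choose $N_f\in\N$ with $\sum_{n>N_f}\|f(n)\|<\eps$, set $g_f:=(f(1),\dotsc,f(N_f),0,0,\dotsc)$, so $\|f-g_f\|<\eps$, and put $M:=\bigcup_{f\in\mathcal{F}}\{1,\dotsc,N_f\}$. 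If $y\in B_{c_0(X_n)}$ satisfies $\|y(m)-x(m)\|<\eps$ for every $m\in M$, a direct estimate yields $|g_f(y-x)|\leq\sum_{n=1}^{N_f}\|f(n)\|\,\|y(n)-x(n)\|<\eps$, and the familiar three-term inequality $|f(y)-f(x)|\leq 2\|f-g_f\|+|g_f(y-x)|<3\eps$ places $y$ in $U$.

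For part \ref{item:2-c0X_n-lemma}, finite dimensionality of each $X_n$ makes $B_{X_n^\ast}$ compact, so for each $n\in M$ I pick a finite $(\eps/3)$-net $B_n\subset B_{X_n^\ast}$. Each $x^\ast\in B_n$ gives a functional on $c_0(X_n)$ via $y\mapsto x^\ast(y(n))$, and I would set
\[
U:=\Bigl\{y\in B_{c_0(X_n)}\colon |x^\ast(y(n)-x(n))|<\tfrac{\eps}{3}\text{ for every } n\in M \text{ and } x^\ast\in B_n\Bigr\},
\]
a relative weak neighbourhood of $x$ in $B_{c_0(X_n)}$. For $y\in U$ and $n\in M$, pick a norming $z^\ast\in B_{X_n^\ast}$ with $z^\ast(y(n)-x(n))=\|y(n)-x(n)\|$ and $x^\ast\in B_n$ with $\|z^\ast-x^\ast\|<\eps/3$; using $\|y(n)-x(n)\|\leq2$, the computation
\[
\|y(n)-x(n)\|\leq\|z^\ast-x^\ast\|\,\|y(n)-x(n)\|+|x^\ast(y(n)-x(n))|<\tfrac{2\eps}{3}+\tfrac{\eps}{3}=\eps
\]
finishes the argument.

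No serious obstacle is anticipated; the whole proof is dictated by parts \ref{item:2-helperC0KX} and \ref{item:3-helperC0KX} of Lemma~\ref{lem: C_0(K,X)* when K scattered -- betterment}, with the small simplification that the dual representation here is explicit, so finite-support truncation painlessly replaces the discrete-measure approximation that previously required scatteredness of~$K$.
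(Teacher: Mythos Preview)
Your proposal is correct and follows essentially the same approach as the paper's own proof: truncation of functionals in $\ell_1(X_n^\ast)$ for part~\ref{item:1-c0X_n-lemma} and a finite $\eps/3$-net in each $B_{X_n^\ast}$ for part~\ref{item:2-c0X_n-lemma}, with the same three-term and norming-functional estimates. The only cosmetic difference is that the paper uses a single truncation index $N$ for all $f\in\mathcal{F}$ while you use individual $N_f$ and take the union; this changes nothing.
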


\begin{proof}
  Set $Z := c_0(X_n)$. Then $Z^* = \ell_1(X_n^*)$.

  \ref{item:1-c0X_n-lemma}.
  Let a finite subset $\mathcal{F}$ of $S_Z$
  and an $\varepsilon > 0$ be such that
  \begin{equation*}
    \{ y  \in B_Z : |f(y) - f(x)| < 3\varepsilon
    \mbox{ for every } f \in \mathcal{F} \}
    \subset U.
  \end{equation*}
  For every $f = (x_n^*)_{n=1}^\infty \in \mathcal{F}$, let $f_N$
  be its projection onto the first $N$ coordinates,
  that is, $f_N = (z_n^*)$ where
  $z_n^* = x_n^*$ for $n \in\{1,\dotsc,N\}$ and $z_n^* = 0$
  for $n > N$. Choose $N$ so large that $\|f - f_N\| \le \varepsilon$
  for all $f \in \mathcal{F}$, and let $M = \{1,\dotsc,N\}$.
  Note that $\|f_N\| \le  \|f\| = 1$.

  Now, if $y\in B_Z$ satisfies \eqref{eq: ||y(n)-x(n)||<eps for every n in M},
  then, for every $f\in\mathcal{F}$,
  \begin{equation*}
    |f(y) - f(x)| \le
    |f(y) - f_N(y)| + |f_N(y) - f_N(x)| + |f_N(x) - f(x)|
    \le 3\varepsilon,
  \end{equation*}
  and it follows that $y \in U$.

  \ref{item:2-c0X_n-lemma}.
  For every $m \in M$, let $A_m \subset B_{X_m^*}$ be a finite
  $\frac{\varepsilon}{3}$-net for $B_{X_m^*}$.
  Set
  \begin{multline*}
    U := \biggl\{
        y \in B_{c_0(X_n)} :
        \text{$\bigl|x^*\bigl( y(m) - x(m) \bigr)\bigr| < \frac{\varepsilon}{3}$
        for every $m \in M$}\\
        \text{and every $x^* \in A_m$}
        \biggr\}.
  \end{multline*}
  Let $y \in U$ and $m \in M$ be arbitrary.
  Picking $z^* \in B_{X_m^*}$ so that
  \begin{equation*}
    z^*\bigl(y(m) - x(m)\bigr) = \|y(m) - x(m)\|,
  \end{equation*}
  there is an $x^* \in A_m$ satisfying
  $\|z^* - x^*\| < \frac{\varepsilon}{3}$.
  One has
  \begin{align*}
    \|y(m) - x(m)\|
    &= z^*\bigl(y(m) - x(m)\bigr) \\
    &\le \|z^* - x^*\|\|y(m) - x(m)\|
      + \bigl|x^*\bigl(y(m) - x(m)\bigr)\bigr| \\
    &\le \frac{2\varepsilon}{3} + \frac{\varepsilon}{3}
     = \varepsilon.
  \end{align*}
\end{proof}

\begin{thm}\label{thm:c0-sum}
  Let $\{X_n\}$ be a sequence of finite-dimensional
  Banach spaces with property \CCOM.
  Then $c_0(X_n)$ has property \CWO.
\end{thm}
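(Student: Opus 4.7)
The plan is to mimic the strategy of the ``core'' Theorem~\ref{thm: "core" theorem for C_0(K,X)}, but in the much simpler setting where the underlying ``space of coordinates'' is the discrete set $\N$. In particular, no Tietze extension or Urysohn-type gluing will be needed: each component can be defined coordinatewise.

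Let $U_1,\dotsc,U_k$ be relatively weakly open subsets of $B_{c_0(X_n)}$, let $\lambda_1,\dotsc,\lambda_k>0$ with $\sum_{j=1}^k\lambda_j=1$, and let $x_j\in U_j$ satisfy $x=\sum_{j=1}^k\lambda_j x_j$. I want to produce a relatively weakly open neighbourhood $U$ of $x$ with $U\subset\sum_{j=1}^k\lambda_j U_j$. First, I would apply Lemma~\ref{lem:rwo-lemma-for-N-and-Xn}\ref{item:1-c0X_n-lemma} to each pair $(x_j,U_j)$, obtain finite sets $M_j\subset\N$ and numbers $\eps_j>0$, and pass to $M:=\bigcup_j M_j$ and $\eps:=\min_j\eps_j$, so that membership in any $U_j$ is guaranteed by $\eps$-closeness to $x_j$ on the coordinates in $M$.

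Next, for each $m\in M$ I would invoke property \CCOM\ of $X_m$ at the point $x(m)\in B_{X_m}$, with decomposition $x(m)=\sum_{j=1}^k\lambda_j x_j(m)$ and tolerance $\eps$. This produces a $\delta_m>0$ and continuous maps
\[
\widehat{v}_{m,j}\colon B\bigl(x(m),\delta_m\bigr)\cap B_{X_m}\to B_{X_m},
\quad j\in\{1,\dotsc,k\},
\]
with $u=\sum_{j=1}^k\lambda_j\widehat{v}_{m,j}(u)$ and $\|\widehat{v}_{m,j}(u)-x_j(m)\|<\eps$ for every $u\in B(x(m),\delta_m)\cap B_{X_m}$. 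Then Lemma~\ref{lem:rwo-lemma-for-N-and-Xn}\ref{item:2-c0X_n-lemma}, applied to the finite set $M$ and a sufficiently small common tolerance (e.g.\ $\min_{m\in M}\delta_m$), furnishes a relatively weakly open neighbourhood $U$ of $x$ in $B_{c_0(X_n)}$ such that $\|u(m)-x(m)\|<\delta_m$ for every $u\in U$ and every $m\in M$.

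Finally, for an arbitrary $u\in U$, I would define
\[
u_j(m):=\widehat{v}_{m,j}\bigl(u(m)\bigr)\quad(m\in M),
\qquad
u_j(n):=u(n)\quad(n\notin M),
\]
for $j\in\{1,\dotsc,k\}$. Each $u_j$ lies in $B_{c_0(X_n)}$: the values are in the respective unit balls, and since $M$ is finite and $u\in c_0(X_n)$, the tails $u_j(n)=u(n)$ tend to zero. Coordinatewise, $\sum_{j=1}^k\lambda_j u_j(m)=u(m)$ on $M$ (by the \CCOM\ identity) and $\sum_{j=1}^k\lambda_j u_j(n)=u(n)$ off $M$, so $\sum_{j=1}^k\lambda_j u_j=u$. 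Moreover, for every $j$ and every $m\in M_j\subset M$ one has $\|u_j(m)-x_j(m)\|<\eps\le\eps_j$, hence $u_j\in U_j$ by the choice of $\eps_j,M_j$. Thus $u\in\sum_{j=1}^k\lambda_j U_j$, as required.

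There is no real obstacle here beyond the bookkeeping in choosing $\eps$ and $M$ uniformly in $j$; the key simplification over the $C_0(K,X)$ argument is that the discreteness of $\N$ lets one define the components $u_j$ directly, without interpolating between coordinates where \CCOM\ is applied and coordinates where $u_j=u$.
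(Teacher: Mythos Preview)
Your proposal is correct and follows essentially the same approach as the paper. The only cosmetic difference is that the paper first invokes Lemma~\ref{lem: reducing to combinations of two} to reduce to convex combinations of two sets (so only property \CCOMtwo is needed at each coordinate), whereas you handle general $k$ directly via the full property \CCOM; the coordinatewise construction of the $u_j$ and the use of Lemma~\ref{lem:rwo-lemma-for-N-and-Xn} are otherwise identical.
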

\begin{proof}
  Set $Z := c_0(X_n)$.
  Let $V_1$ and $V_2$ be relatively weakly open subsets of $B_{Z}$
  and let $\lambda_1,\lambda_2 > 0$ with $\lambda_1+\lambda_2 = 1$.
  Using Lemma~\ref{lem: reducing to combinations of two},
  it is enough to
  consider the convex combination $C:=\lambda_1 V_1+\lambda_2 V_2$.
    Let $x =\lambda_1 x_1+\lambda_2 x_2\in C$ with $x_j \in V_j$.
  We are going to find a neighbourhood $U$ of $x$ in the
  relative weak topology of $B_Z$ such that $U \subset C$.

  By Lemma~\ref{lem:rwo-lemma-for-N-and-Xn}, \ref{item:1-c0X_n-lemma},
  there are an $\varepsilon > 0$ and a finite subset
  $M$ of $\N$ such that
  \begin{itemize}
  \item
    whenever $y_1,y_2 \in B_Z$ are such that, for every $m \in M$,
    \begin{equation*}
      \|y_j(m) - x_j(m)\| < \varepsilon,\quad j \in \{1,2\},
    \end{equation*}
    one has $y_j \in V_j$, $j \in \{1,2\}$.
  \end{itemize}
  For every $m \in M$, let $\delta_m$ and $\widehat{v}_{m,j}$,
  be, respectively, the $\delta$ and the functions
  $\widehat{v}_j$ from condition \CCOMn{} of Definition~\ref{def:P11}
  with $X=X_m$, $n=2$, $x = x(m)$, and $x_j = x_j(m)$, $j \in \{1,2\}$.
  By Lemma~\ref{lem:rwo-lemma-for-N-and-Xn}, \ref{item:2-c0X_n-lemma},
  there is a neighbourhood $U$ of $x$ in the relative weak topology
  of $B_Z$ such that, for every $y \in U$,
  \begin{equation*}
    \|y(m) - x(m)\| < \delta_m
    \quad \mbox{for every } m \in M.
  \end{equation*}

  Let $y\in U$ be arbitrary.   Define $y_1,y_2\in B_Z$ by
  $y_j(m) = \widehat{v}_{m,j}\bigl(y(m)\bigr)$ for every $m \in M$
  and $y_j(n) = y(n)$ for every $n\in\N \setminus M$.
  Then $y =\lambda_1 y_1+\lambda_2 y_2$ with $y_j \in V_j$,
  and hence $U \subset C$.
\end{proof}

It is known that the Banach space
$c_0(\ell_2^n)$ is not isomorphic to $c_0$
(here, by $c_0(\ell_2^n)$ we mean the space $c_0(X_n)$,
where $X_n=\ell_2^n$ for every $n\in\N$).
By the above theorem, and
Proposition~\ref{prop:extB_X_and_points_in_B^0_X_have_P11},
\ref{item:prop_P11-2},
the space $c_0(\ell_2^n)$ has property \CWO.
In fact, we have the following result.

\begin{cor}\label{cor:c_0ofell_pn_cwo}
  Whenever $1 \le p \le \infty$, the Banach space $c_0(\ell_p^n)$
  has property \CWO.
\end{cor}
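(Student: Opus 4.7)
The plan is to apply Theorem~\ref{thm:c0-sum}, which asserts that a $c_0$-sum of finite-dimensional Banach spaces with property \CCOM automatically has property \CWO. Thus it suffices to verify that, for every $n \in \N$ and every $p \in [1,\infty]$, the space $\ell_p^n$ over the prescribed scalar field $\K$ has property \CCOM. I would handle this by splitting into three ranges of $p$ and invoking the appropriate sufficient condition established earlier.

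For $1 < p < \infty$, the space $\ell_p^n$ is strictly convex, so property \CCOM is immediate from Proposition~\ref{prop:extB_X_and_points_in_B^0_X_have_P11}\ref{item:prop_P11-2}. For $p = \infty$, the dual $(\ell_\infty^n)^* = \ell_1^n$ is polyhedral in both the real and complex cases (as noted in the remark between Proposition~\ref{prop:polyhedral-dual-ccom} and Theorem~\ref{thm:complex-ell_1^n}), hence Proposition~\ref{prop:polyhedral-dual-ccom} delivers property \CCOM for $\ell_\infty^n$. For $p = 1$ in the real case, the dual is real $\ell_\infty^n$, which is polyhedral, so Proposition~\ref{prop:polyhedral-dual-ccom} applies once more.

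The single situation not covered by the polyhedrality-of-dual criterion is complex $\ell_1^n$, since complex $\ell_\infty^n$ fails to be polyhedral; this is, however, precisely what Theorem~\ref{thm:complex-ell_1^n} was designed to handle. With all cases in place, Theorem~\ref{thm:c0-sum} yields property \CWO for $c_0(\ell_p^n)$. There is no genuine obstacle at the corollary level: the substantive work is packaged in Theorem~\ref{thm:complex-ell_1^n}, whose bespoke construction for complex $\ell_1^n$ is exactly what permits the uniform statement over $1 \le p \le \infty$ and over both scalar fields.
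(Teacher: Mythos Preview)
Your proposal is correct and matches the paper's own proof essentially verbatim: the paper likewise invokes Theorem~\ref{thm:c0-sum} and dispatches $1<p<\infty$ via strict convexity (Proposition~\ref{prop:extB_X_and_points_in_B^0_X_have_P11}\ref{item:prop_P11-2}), $p=\infty$ via Proposition~\ref{prop:polyhedral-dual-ccom}, and $p=1$ via Theorem~\ref{thm:complex-ell_1^n}. Your explicit separation of real and complex $\ell_1^n$ is a harmless refinement of presentation, not a different argument.
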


\begin{proof}
  We use Theorem \ref{thm:c0-sum} together
  with Theorem~\ref{thm:complex-ell_1^n} if $p=1$,
   with Proposition~\ref{prop:extB_X_and_points_in_B^0_X_have_P11},
  \ref{item:prop_P11-2}, if $1 < p < \infty$, and with
  Proposition~\ref{prop:polyhedral-dual-ccom} if $p = \infty$.
\end{proof}

The above result does not hold for $\ell_\infty$-sums.
Not even $\ell_\infty = C(\beta \N)$ has property \CWO
since $\beta \N$ is not scattered
(that would force $\ell_\infty$ to be Asplund
\cite[Theorem~14.25]{MR2766381}), and $C(K)$
has property \CWO only when $K$ is a
scattered compact Hausdorff space
\cite[Theorem~3.1]{2017arXiv170302919H}.

In \cite[Remark~IV.5]{MR912637}, it was observed
that finite convex combinations of relatively
weakly open subsets of the positive face
of the unit ball of $L_1[0,1]$ are still relatively weakly open.
Our next theorem shows that the same (and even more) holds
for the whole unit sphere.
\begin{rem}\label{rem:L1mu-atomless-sphere}
  Let $\mu$ be a non-zero $\sigma$-finite (countably additive non-negative)
  measure on a $\sigma$-algebra $\Sigma$
  of a non-empty set $\Omega$.
  Then $\mu$ is atomless if and only if
  every finite convex combination
  of relatively weakly open subsets of $B_{L_1(\mu)}$
  intersects the unit sphere.

  Indeed, let $U_1, \dotsc, U_n$ be non-empty
  relatively weakly open subsets of $B_{L_1(\mu)}$.
  By Bourgain's lemma \cite[Lemma~II.1]{MR912637},
  each $U_j$ contains a finite convex combination
  of slices (of $B_{L_1(\mu)}$). Hence any convex combination of
  the $U_j$'s contains a finite convex combination of slices.
  Identifying $L_1(\mu)^*$ with $L_\infty(\mu)$,
  one can easily show that if $\mu$ is atomless,
  then any convex combination
  of slices of $B_{L_1(\mu)}$---and thus also any convex combination
  of the $U_j$'s---intersects $S_X$
  (see \cite[Example~3.2]{2017arXiv170106169A} for an argument).

  On the other hand, if $A\in\Sigma$ is an atom for $\mu$,
  then $\frac{1}{\mu(A)}\chi_A\in B_{L_1(\mu)}$ is strongly exposed by
  $g:=\chi_A\in B_{L_\infty(\mu)}$
  (here we identify  $L_1(\mu)^*$ with $L_\infty(\mu)$ again).
  It follows that if $\alpha>0$
  is small enough, then the slices $S_1:=S(g,\alpha)$ and $S_2:=S(-g,\alpha)$
  of $B_{L_1(\mu)}$ have diameter less than $1$. Now, the convex combination
  $C:=\frac{1}{2}S_1+\frac{1}{2}S_2$ contains~$0$
  and has diameter less than $1$; thus $C$ does not intersect the unit sphere.
\end{rem}
As a consequence of the results in \cite{MR2326380}
and the above remark, $L_1(\mu)$ has the Daugavet property
if and only if every finite convex combination
of relatively weakly open sets of its unit ball intersects the unit sphere.
Next we show that every point in such an intersection is an interior point
of the corresponding convex combination
in the relative weak topology of $B_{L_1(\mu)}$.

\begin{thm}\label{thm:L1mu-CWOSX}
  Let $\mu$ be a non-zero $\sigma$-finite
  (countably additive non-negative)
  measure on a sigma-algebra $\Sigma$ of a non-empty set $\Omega$.
  Then the real space $L_1(\mu)$ has property \CWOSX.
\end{thm}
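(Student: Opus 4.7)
The plan is to first reduce, via Lemma~\ref{lem: reducing to combinations of two}, to the two-term case: relatively weakly open $U_1,U_2\subset B_{L_1(\mu)}$, weights $\lambda_1,\lambda_2>0$ with $\lambda_1+\lambda_2=1$, and $x\in S_{L_1(\mu)}$ with $x=\lambda_1 x_1+\lambda_2 x_2$, $x_j\in U_j$; the task is to find a relatively weakly open neighbourhood $V$ of $x$ with $V\subset\lambda_1 U_1+\lambda_2 U_2$. Equality in the triangle inequality $1=\|x\|\le\lambda_1\|x_1\|+\lambda_2\|x_2\|\le 1$ forces $\|x_1\|=\|x_2\|=1$; in $L_1(\mu)$ this additionally implies $x_1(\omega)x_2(\omega)\ge 0$ a.e., and combining this with $x=\lambda_1 x_1+\lambda_2 x_2$ yields $x_1=x_2=0$ a.e.\ on $\{x=0\}$. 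Hence $x,x_1,x_2$ share the sign function $\sigma:=\sgn(x)$ on $\supp(x)$. Picking any measurable extension $\widetilde\sigma\colon\Omega\to\{-1,+1\}$ of $\sigma$, multiplication by $\widetilde\sigma$ is a surjective linear isometry of $L_1(\mu)$ preserving $B_{L_1(\mu)}$ and acting as a weak homeomorphism; applying it, we reduce to the case $x,x_1,x_2\in\mathcal{F}:=\{f\in L_1(\mu):f\ge 0,\ \|f\|=1\}$.

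Next I would invoke the \emph{positive face result}: finite convex combinations of relatively weakly open subsets of $\mathcal{F}$ are relatively weakly open in $\mathcal{F}$. This is \cite[Remark~IV.5]{MR912637} for $L_1[0,1]$, and the same Bourgain-type reasoning extends to arbitrary $\sigma$-finite $L_1(\mu)$. Concretely, fix $\eps>0$ and functionals $\phi^j_k\in B_{L_\infty(\mu)}$ so that $\{z\in B_{L_1(\mu)}\colon|\int\phi^j_k(z-x_j)\,d\mu|<\eps,\ \forall k\}\subset U_j$; set $W_j:=\{z\in\mathcal{F}\colon|\int\phi^j_k(z-x_j)\,d\mu|<\eps/2,\ \forall k\}$. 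The positive face result supplies $\psi_1,\dotsc,\psi_L\in B_{L_\infty(\mu)}$ and $\nu>0$ such that every $z\in\mathcal{F}$ with $|\int\psi_l(z-x)\,d\mu|<\nu$ ($l=1,\dotsc,L$) decomposes as $z=\lambda_1 z_1+\lambda_2 z_2$ with $z_j\in W_j$.

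The core technical step is lifting this decomposition from $\mathcal{F}$ to $B_{L_1(\mu)}$. Pick $0<\eta<\min(\eps/3,\nu/3)$ and set
\[V:=\bigl\{y\in B_{L_1(\mu)}:\,\bigl|{\textstyle\int}(y-x)\,d\mu\bigr|<\eta\text{ and }\bigl|{\textstyle\int}\psi_l(y-x)\,d\mu\bigr|<\nu/2,\ l=1,\dotsc,L\bigr\}.\]
For $y\in V$, combining $\int y>1-\eta$ with $\|y\|\le 1$ and the disjoint-support decomposition $y=y^+-y^-$ gives $\|y^-\|<\eta/2$ and $\|y^+\|>1-\eta$. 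The projection $\hat y:=y^+/\|y^+\|\in\mathcal{F}$ satisfies $\|\hat y-y\|=(1-\|y^+\|)+\|y^-\|<3\eta/2$, so $|\int\psi_l(\hat y-x)\,d\mu|<\nu$ and the positive face result yields $\hat y=\lambda_1\hat y_1+\lambda_2\hat y_2$ with $\hat y_j\in W_j$. Define $y_j:=\|y^+\|\hat y_j-y^-$. Then $\lambda_1 y_1+\lambda_2 y_2=\|y^+\|\hat y-y^-=y^+-y^-=y$; since $\|y^+\|\hat y_j\ge 0$ and $y^-\ge 0$, the pointwise bound $|\|y^+\|\hat y_j-y^-|\le\|y^+\|\hat y_j+y^-$ integrated over $\mu$ gives $\|y_j\|\le\|y^+\|+\|y^-\|=\|y\|\le 1$; and for each $\phi^j_k$,
\[\bigl|{\textstyle\int}\phi^j_k(y_j-x_j)\,d\mu\bigr|\le(1-\|y^+\|)+\bigl|{\textstyle\int}\phi^j_k(\hat y_j-x_j)\,d\mu\bigr|+\|y^-\|<\eta+\eps/2+\eta/2<\eps,\]
so $y_j\in U_j$, whence $V\subset\lambda_1 U_1+\lambda_2 U_2$.

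The main obstacle is the lifting step: one must exploit the weak condition on $\int y$ (obtained from the constant functional $\mathbf{1}\in B_{L_\infty(\mu)}$, which is available since $\mu$ is $\sigma$-finite) to force $y$ to be close in norm to a point of $\mathcal{F}$, and then carefully perturb the $\mathcal{F}$-decomposition so that the resulting $y_j$ remain in $B_{L_1(\mu)}$. The key tricks are the projection $y\mapsto y^+/\|y^+\|$, whose norm error is controlled by the weak condition on $\int(y-x)$, and exploiting the disjointness of $\supp(y^+)$ and $\supp(y^-)$ together with nonnegativity of $\|y^+\|\hat y_j$ and $y^-$ to get $\|y_j\|\le\|y\|$ for free. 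A secondary point is verifying that the positive-face result, stated in \cite{MR912637} for $L_1[0,1]$, transfers to arbitrary $\sigma$-finite $L_1(\mu)$.
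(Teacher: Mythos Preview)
Your argument is correct and takes a genuinely different route from the paper's own proof. The paper does \emph{not} pass through the positive face: it works directly with arbitrary $x,x_1,x_2\in S_{L_1(\mu)}$, approximates the defining functionals by simple functions so that the $U_j$ are controlled by integrals over finitely many disjoint sets $E_1,\dotsc,E_n$, and then for each $w$ in a suitable weak neighbourhood of $x$ builds $u,v$ coordinate-by-coordinate as $u_i=\frac{\alpha_i}{\gamma_i}w\chi_{E_i}$, $v_i=\frac{\beta_i}{\gamma_i}w\chi_{E_i}$ (with $\alpha_i=\int_{E_i}x_1$, $\beta_i=\int_{E_i}x_2$, $\gamma_i=\int_{E_i}x$), together with a small correction term $c/\rho$ to force $\|u\|,\|v\|\le 1$. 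Your approach is more modular: you first reduce via the sign isometry to the nonnegative case, invoke the Ghoussoub--Godefroy--Maurey--Schachermayer result on the positive face $\mathcal{F}$ as a black box, and then perform a clean lifting $y\mapsto y^+/\|y^+\|$ back to $B_{L_1(\mu)}$, where the bound $\|y_j\|\le\|y^+\|+\|y^-\|=\|y\|$ comes for free. What your approach buys is conceptual clarity and a short lifting step; what the paper's approach buys is self-containment---it does not rely on the assertion that \cite[Remark~IV.5]{MR912637} (stated there for $L_1[0,1]$) extends to arbitrary $\sigma$-finite $L_1(\mu)$, which you flag as a secondary point but do not carry out. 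If you want a fully self-contained version, you should either supply that extension or note that the paper's direct construction, specialised to $x,x_1,x_2\ge 0$, \emph{is} a proof of the positive-face statement for general $\sigma$-finite $\mu$.
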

\begin{proof}
  Let $U_1$ and $U_2$ be relatively weakly open subsets of the closed
  unit ball $B_{L_1(\mu)}$ of $L_1(\mu)$,
  and let $\lambda_1, \lambda_2>0$, $\lambda_1 + \lambda_2 = 1$,
  and $x_1 \in U_1$, $x_2 \in U_2$ be
  such that $\|\lambda_1 x_1 + \lambda_2 x_2 \|=1$.
  By Lemma~\ref{lem: reducing to combinations of two}
  it is enough to find a neighbourhood $W$ of
  $x := \lambda_1 x_1 + \lambda_2 x_2$ in
  the relative weak topology of $B_{L_1(\mu)}$
  such that $W\subset \lambda_1 U_1 + \lambda_2 U_2$.

  Throughout the proof, whenever convenient, we identify functionals
  in $L_1(\mu)^\ast$ with elements in $L_\infty(\mu)$ in the canonical
  way.
  Since $L_\infty(\mu)=\overline{\spann}\{\chi_E\colon\,E\in\Sigma\}$,
  there are a finite collection $\mathcal{F}$ of subsets of $\Sigma$
  and an $\eps>0$ such that
  \begin{equation*}
    V_i := \biggl\{u \in B_{L_1(\mu)} :
    \biggl |\int_{E} (u-x_i) \,d\mu \biggr|
    < 2\eps
    \ \text{for every }
    E \in \mathcal{F} \biggr\}
    \subset U_i,\quad i \in \{1,2\}.
  \end{equation*}
   We may assume that $\bigcup_{E\in\mathcal{F}}E=\Omega$, that
  the sets in $\mathcal{F}$ are pairwise disjoint,
  and that, for every $E \in \mathcal{F}$,
  either $x_1\chi_E \geq 0$ a.e. and $x_2\chi_E \geq 0$ a.e., or
  $x_1\chi_E\leq 0$ a.e. and $x_2 \chi_E \leq 0$ a.e.
  (the latter is because if $x_1x_2(t) < 0$ for almost every $t$ in a
  set $D\in\Sigma$ with $\mu(D) > 0$, then one would have
  $\|x\| = \|\lambda_1 x_1 + \lambda_2 x_2\| < 1$).

  Set $E_0:=\bigcup_{E\in\mathcal{F}_0}E$ where
  $\mathcal{F}_0:=\{E \in \mathcal{F} : \int_{E} x \,d\mu=0\}$,
  and label the sets in $\mathcal{F}\setminus\mathcal{F}_0$ as
  $E_1,\dotsc,E_n$ where $n\in\N$.
  For every $i\in \{1,\dotsc,n\}$, set
  \begin{equation*}
    \alpha_i := \int_{E_i} x_1 \,d\mu,
    \quad
    \beta_i := \int_{E_i} x_2 \,d\mu,
    \quad\text{and}\quad
    \gamma_i := \lambda_1\alpha_i + \lambda_2 \beta_i
    = \int_{E_i} x \,d\mu.
  \end{equation*}
  Note that, since $x_1$ and $x_2$ have the same sign
  on each $E_i$, we have $\frac{\alpha_i}{\gamma_i} \ge 0$,
  $\frac{\beta_i}{\gamma_i} \ge 0$, and
  $|\gamma_i| = \int_{E_i} |x| \,d\mu$
  for every $i \in \{1,\dotsc,n\}$.

  Set $J_0:=\bigl\{i\in \{1,\dotsc,n\}\colon\,
  \text{$\alpha_i=0$ or $\beta_i=0$}\bigr\}$,
  $J_1:=\{1,\dotsc,n\}\setminus J_0$,
  and, if $J_1\ne\emptyset$, then set
  $\Gamma :=\sum_{i\in J_1}|\gamma_i|$.
  Pick $\delta>0$ so that $Mn^2\delta<\eps$ where
  $M:=\max_{1\leq i\leq n}\frac{\alpha_i+\beta_i}{\gamma_i}$.
  We may assume that if $J_1\ne\emptyset$, then
  $\delta<\frac{|\gamma_i|}{2}$
  and
  $\frac{2Mn^2\delta}{\Gamma}<\min\bigl\{\frac{\alpha_i}{\gamma_i},
  \frac{\beta_i}{\gamma_i}\bigr\}$ for every $i\in J_1$.

  Define
  \begin{equation*}
    W:=\biggl\{w\in
    B_{L_1(\mu)}\colon\,\text{$\biggl|\int_{E_i}w\,d\mu-\gamma_i\biggr|<\delta$
      for every $i\in \{1,\dotsc,n\}$}\biggr\}.
  \end{equation*}
  Let $w\in W$ be arbitrary. For every $i\in\{0,1,\dotsc,n\}$, set
  $w_i=w\chi_{E_i}$ and
  $\eta_i:=\|w_i\|-|\gamma_i|$.
  Then
  \begin{align*}
    |\gamma_i|+\eta_i=\|w_i\|=\int_{E_i}|w|\,d\mu \geq
    \Bigl|\int_{E_i}w\,d\mu\Bigr|>|\gamma_i|-\delta,
  \end{align*}
  whence $\eta_i > -\delta$. On the other hand,
  \begin{align*}
    1 \geq \|w\| = \sum_{j=0}^n\|w_j\| \geq
    \sum_{j=1}^n|\gamma_j| + \eta_i + \sum_{\substack{j=1\\j\ne i}}^n
    \eta_j
    > 1 + \eta_i-(n-1)\delta,
  \end{align*}
  whence $\eta_i < (n-1)\delta$, and thus $|\eta_i| < n\delta$.
  Observe that if $J_1\ne\emptyset$, then
  \begin{align*}
    \rho: = \sum_{i\in J_1}\|w_i\|
    = \sum_{i\in J_1}\bigl(|\gamma_i|+\eta_i\bigr)
    > \sum_{i\in J_1}\bigl(|\gamma_i|-\delta)
    > \sum_{i\in J_1}\frac{|\gamma_i|}{2}
    =\frac{\Gamma}{2}.
  \end{align*}
  Setting
  \[
    c := \sum_{i=1}^n\frac{\beta_i-\alpha_i}{\gamma_i}\,\eta_i,
  \]
  one has
  \[
  |c|\leq M\sum_{i=1}^n|\eta_i|<Mn^2\delta,
  \]
  and thus, for every $i\in J_1$,
  \begin{align*}
    \frac{|c|}{\rho}
    \leq \frac{2|c|}{\Gamma}
    < \frac{2Mn^2\delta}{\Gamma}
    < \min\biggl\{\frac{\alpha_i}{\gamma_i},\frac{\beta_i}{\gamma_i}\biggr\}.
  \end{align*}

  Define $u_0 := v_0 := w_0$ and,
  for every $i\in \{1,\dotsc,n\}$,
  \begin{equation*}
    \left\{
      \begin{aligned}
        u_i&:= \frac{\alpha_i}{\gamma_i} w_i,\\
        v_i&:= \frac{\beta_i}{\gamma_i} w_i
      \end{aligned}
    \right.
    \quad\text{if $i\in J_0$,}
    \qquad\text{and}\qquad
    \left\{
      \begin{aligned}
        u_i&:=
        \left(\frac{\alpha_i}{\gamma_i} + \lambda_2\,\frac{c}{\rho}\right) w_i, \\
        v_i&:=
        \left(\frac{\beta_i}{\gamma_i} - \lambda_1\,\frac{c}{\rho}\right) w_i
      \end{aligned}
    \right.
    \quad\text{if $i\in J_1$.}
  \end{equation*}
  Define $u := \sum_{i=0}^n u_i$ and
  $v := \sum_{i=0}^n v_i$.
  For every $i\in \{0,1,\dotsc,n\}$,
  one has $\lambda_1 u_i + \lambda_2 v_i = w_i$;
  thus $\lambda_1 u + \lambda_2 v = \sum_{i=0}^n w_i = w$.
  Now,
  \begin{align*}
    \|u\| &=
            \|w_0\|
            + \sum_{i \in J_0} \frac{\alpha_i}{\gamma_i} \|w_i\|
            + \sum_{i \in J_1} \left(\frac{\alpha_i}{\gamma_i}
            + \lambda_2\,\frac{c}{\rho}\right) \|w_i\| \\
          &=
            \|w_0\|
            + \sum_{i=1}^n \frac{\alpha_i}{\gamma_i} \|w_i\|
            + \lambda_2 c \\
          &=
            \|w_0\|
            +
            \sum_{i=1}^n \frac{\alpha_i}{\gamma_i}
            (|\gamma_i| + \eta_i)
            + \lambda_2 \sum_{i=1}^n\frac{\beta_i-\alpha_i}{\gamma_i}\,\eta_i \\
          &=
            \|w_0\|
            +
            \sum_{i=1}^n |\alpha_i|
            + \sum_{i=1}^n\frac{\lambda_1\alpha_i+\lambda_2\beta_i}{\gamma_i}\,\eta_i \\
          &=
            \|w_0\| + 1 + \sum_{i=1}^n\eta_i,
  \end{align*}
  and, similarly, $\|v\|=\|w_0\| + 1 + \sum_{i=1}^n\eta_i$.
      Since
  \begin{equation}\label{eq: ||w_0||=<...}
   \|w_0\|
    =\|w\| - \sum_{i=1}^n\|w_i\|
    \leq 1 - \sum_{i=1}^n|\gamma_i| - \sum_{i=1}^n\eta_i
    = -\sum_{i=1}^n\eta_i,
  \end{equation}
   it follows that $\|u\|, \|v\| \le 1$.

  It remains to show that $u\in U_1$ and $v\in U_2$.
  To this end, first observe that, for every $i\in \{1,\dotsc,n\}$,
  \begin{equation*}
    \biggl|\frac{\alpha_i}{\gamma_i}\int_{E_i}w\,d\mu - \alpha_i\biggr|
    =\frac{\alpha_i}{\gamma_i}\,
    \biggl|\int_{E_i}w\,d\mu - \gamma_i \biggr|
    < M\delta < \eps.
  \end{equation*}
  Thus $\bigl| \int_{E_i}u\,d\mu-\alpha_i \bigr| < \eps$
  for every $i\in J_0$.
  For every $i\in J_1$,
  \begin{equation*}
    \biggl| \int_{E_i}\frac{c}{\rho}w \,d\mu \biggr|
    \leq \frac{|c|}{\rho} \int_{E_i}|w| \,d\mu
    = \frac{|c|}{\rho} \|w_i\|
    \leq |c|
    < Mn^2\delta
    < \eps,
  \end{equation*}
    thus
  \begin{equation*}
    \biggl| \int_{E_i}u\,d\mu - \alpha_i \biggr|
    \leq\biggl| \frac{\alpha_i}{\gamma_i}
    \int_{E_i} w \,d\mu - \alpha_i \biggr|
    + \lambda_2\biggl| \int_{E_i}\frac{c}{\rho} w \,d\mu \biggr|
    < 2\eps.
  \end{equation*}
   Since, by \eqref{eq: ||w_0||=<...},
  $\|w_0\|\leq- \sum_{i=1}^n\eta_i<n\delta<\eps$,
  one has, for every $E \in \mathcal{F}_0$,
  \begin{equation*}
    \biggl| \int_E u\,d\mu - \int_E x_1 \,d\mu \biggr|
    =\biggl| \int_E w_0\,d\mu \biggr|
      \leq \int_{\Omega} |w_0|\,d\mu
      = \|w_0\| < \eps.
  \end{equation*}
  It follows that $u \in V_1 \subset U_1$.
  One can similarly show that $v \in V_2 \subset U_2$,
  and the proof is complete.
\end{proof}

\section{Questions}
\label{sec:quest}

We end the paper with some natural questions.

\begin{enumerate}
\item
  All of our infinite dimensional examples
  of spaces with property \CWO contain $c_0$,
  that is, both $C_0(K,X)$ and $c_0(X_n)$ contain
  a copy of $c_0$.
  Must every Banach space with property \CWO
  contain a copy of $c_0$?
\item
  Does there exist a dual (infinite dimensional)
  Banach space with property \CWO?
\item
  If both $X$ and $Y$ have property \CWO,
  does the injective tensor product
  $X \widehat{\otimes}_\eps Y$ also have property \CWO?

  We could also ask the same if both
  $X$ and $Y$ have either property \CWOSX or \CWOBX.
\end{enumerate}

\bibliographystyle{amsplain}
\providecommand{\bysame}{\leavevmode\hbox to3em{\hrulefill}\thinspace}
\providecommand{\MR}{\relax\ifhmode\unskip\space\fi MR }
\providecommand{\MRhref}[2]{%
  \href{http://www.ams.org/mathscinet-getitem?mr=#1}{#2}
}
\providecommand{\href}[2]{#2}

\end{document}